\definecolor{vegasgold}{rgb}{0.77, 0.7, 0.35}
\definecolor{darkgoldenrod}{rgb}{0.72, 0.53, 0.04}
\definecolor{gold(metallic)}{rgb}{0.83, 0.69, 0.22}
\DeclareFontFamily{U}{wncy}{}
\DeclareFontShape{U}{wncy}{m}{n}{<->wncyr10}{}
\DeclareSymbolFont{mcy}{U}{wncy}{m}{n}
\DeclareMathSymbol{\Sh}{\mathord}{mcy}{"58}
\newtheorem{theorem}{Theorem}[section]
\newtheorem*{theorem*}{Theorem}
\newtheorem*{ass*}{Assumption}
\newtheorem{definition}{Definition}[section]
\newtheorem{corollary}[theorem]{Corollary}
\newtheorem{remark}[theorem]{Remark}
\newtheorem{proposition}[theorem]{Proposition}
\newtheorem{question}[theorem]{Question}
\newcommand{\dL}{\mathds{L}}
\newcommand{\cF}{\mathcal{F}}
\newcommand{\cG}{\mathcal{G}}
\newcommand{\Z}{\mathbb{Z}}
\newcommand{\p}{\mathfrak{p}}
\newcommand{\Q}{\mathbb{Q}}
\newcommand{\F}{\mathbb{F}}
\newcommand{\cL}{\mathcal{L}}
\newcommand{\cO}{\mathcal{O}}
\newcommand{\cP}{\mathcal{P}}
\newcommand{\op}[1]{\operatorname{#1}}
 \newcommand{\widebar}[1]{\mkern 2.5mu\overline{\mkern-2.5mu#1\mkern-2.5mu}\mkern 2.5mu}
\numberwithin{equation}{section}
\begin{document}

\title[Asymptotic Growth patterns for class field towers]{Asymptotic Growth patterns for class field towers}

\author[A.~Bhattacharyya]{Arindam Bhattacharyya}
\address[A.~Bhattacharyya]{Chennai Mathematical Institute, H1, SIPCOT IT Park, Kelambakkam, Siruseri, Tamil Nadu 603103, India}
\email{arindamb@cmi.ac.in}

\author[V.~Kadiri]{Vishnu Kadiri}
\address[V.~Kadiri]{Chennai Mathematical Institute, H1, SIPCOT IT Park, Kelambakkam, Siruseri, Tamil Nadu 603103, India}
\email{vishnuk@cmi.ac.in}

\author[A.~Ray]{Anwesh Ray}
\address[A.~Ray]{Chennai Mathematical Institute, H1, SIPCOT IT Park, Kelambakkam, Siruseri, Tamil Nadu 603103, India}
\email{anwesh@cmi.ac.in}

\maketitle

\begin{abstract}
Let $p$ be an odd prime number. We study growth patterns associated with finitely ramified Galois groups considered over the various number fields varying in a $\mathbb{Z}_p$-tower. These Galois groups can be considered as non-commutative analogues of ray class groups. For certain $\mathbb{Z}_p$-extensions in which a given prime above $p$ is completely split, we prove precise asymptotic lower bounds. Our investigations are motivated by the classical results of Iwasawa, who showed that there are growth patterns for $p$-primary class numbers of the number fields in a $\mathbb{Z}_p$-tower. 
\end{abstract}

\section{Introduction}

\subsection{Motivation from Iwasawa theory} Let $\dL$ be a number field and $p$ be an odd prime number. Choose an algebraic closure $\bar{\dL}$ of $\dL$. Let $H_p(\dL)$ be the maximal abelian unramified $p$-extension of $\dL$, and let $\mathcal{A}_p(\dL)$ denote the Galois group $\op{Gal}(H_p(\dL)/\dL)$. By class field theory, $\mathcal{A}_p(\dL)$ is naturally isomorphic to the $p$-primary part of the class group of $\dL$. Setting $h_p(\dL):=\# \mathcal{A}_p(\dL)$, we refer to $h_p(\dL)$ as the $p$-class number of $\dL$. We set $\Z_p$ to denote the ring of $p$-adic integers. A $\Z_p$-extension of a number field $\dL$ is an infinite Galois extension $\dL_\infty/\dL$, such that $\op{Gal}(\dL_\infty/\dL)$ is isomorphic to $\Z_p$ (as a topological group). Given a $\Z_p$-extension $\dL_\infty/\dL$, and $n\in\Z_{\geq 0}$, we set $\dL_n/\dL$ to be the extension such that $\dL_n\subseteq \dL_\infty$ and $[\dL_n:\dL]=p^n$. The field $\dL_n$ is the \emph{$n$-th layer}, and we obtain a tower of number fields 
\[\dL=\dL_0\subseteq \dL_1\subseteq \dots \subseteq \dL_n\subseteq \dL_{n+1}\subseteq \dots.\]
Let $e_n\in \Z_{\geq 0}$ be such that $p^{e_n}=h_p(\dL_n)$. In his seminal work, Iwasawa \cite{iwasawa1973zl} showed that there are constants $\mu, \lambda\in \Z_{\geq 0}$ and $\nu\in \Z$, such that for all large enough values of $n$, one has that $e_n=p^n \mu+ n\lambda+\nu$.
\par Thus, Iwasawa's results show that there are interesting growth patterns for $p$-class numbers in certain infinite towers of number fields. These results motivate the study of asymptotic growth properties of $p$ Hilbert class field towers along $\Z_p$-extensions. Throughout, $p$ will be a fixed odd prime number, and let $\dL_\infty/\dL$ be a $\Z_p$-extension. Set $\cF(\dL_n)$ to denote the maximal unramified pro-$p$ extension of $\dL_n$, and set $G_n:=\op{Gal}(\cF(\dL_n)/\dL_n)$. We identify the abelianization of $G_n$ with \[\mathcal{A}_p(\dL_n)=\op{Gal}(H_p(\dL_n)/\dL_n).\]

\par The field $\cF(\dL_n)$ contains a $p$ Hilbert class field tower over $\dL_n$. In greater detail, for $j\in \Z_{\geq 0}$, define $H_p^{(j)}(\dL_n)$ inductively as follows
\[\begin{split}& H_p^{(0)}(\dL_n):=\dL_n,\\ & H_p^{(1)}(\dL_n):=H_p(\dL_n), \\ 
& H_p^{(j)}(\dL_n):=H_p\left(H_p^{(j-1)}(\dL_n)\right)\text{ for }j\geq 2. \end{split}\] In this way, we obtain a tower of $p$-extensions of $\dL_n$ whose union is equal to $\cF(\dL_n)$. The field $\cF(\dL_n)$ is infinite if and only if $H_p^{(j)}(\dL_n)\subsetneq H_p^{(j+1)}(\dL_n)$ for all $j\geq 1$. These Hilbert class field towers, when viewed along the $\Z_p$-extension, are represented by the following diagram
\[ \begin{tikzpicture}
\node (Q) at (-3,-2) {$\Q$.};
    \node (Q1) at (-3,0) {$\dL$};
    \node (Q2) at (-3,2) {$\dL_1$};
     \node (Q3) at (-3,4) {$\vdots$};
      \node (Q4) at (-3,6) {$\dL_n$};
       \node (Q5) at (-3,8) {$\dL_{n+1}$};
        \node (Q6) at (-3,10) {$\vdots$};

     \node (P1) at (-1,1) {$H_p(\dL)$};
    \node (P2) at (-1,3) {$H_p(\dL_1)$};
     \node (P3) at (-1,5) {$\vdots$};
      \node (P4) at (-1,7) {$H_p(\dL_n)$};
       \node (P5) at (-1,9) {$H_p(\dL_{n+1})$};
        \node (P6) at (-1,11) {$\vdots$};

       \node (R1) at (1,2) {$H_p^{(2)}(\dL)$};
    \node (R2) at (1,4) {$H_p^{(2)}(\dL_1)$};
     \node (R3) at (1,6) {$\vdots$};
      \node (R4) at (1,8) {$H_p^{(2)}(\dL_n)$};
       \node (R5) at (1,10) {$H_p^{(2)}(\dL_{n+1})$};
        \node (R6) at (1,12) {$\vdots$};  

         \node (S1) at (3,3) {\reflectbox{$\ddots$}};
    \node (S2) at (3,5) {\reflectbox{$\ddots$}};
     \node (S3) at (3,7) {\reflectbox{$\ddots$}};
      \node (S4) at (3,9) {\reflectbox{$\ddots$}};
       \node (S5) at (3,11) {\reflectbox{$\ddots$}};
        \node (S6) at (3,13) {};  

         \node (T1) at (5,4) {$H_p^{(j)}(\dL)$};
    \node (T2) at (5,6) {$H_p^{(j)}(\dL_1)$};
     \node (T3) at (5,8) {$\vdots$};
      \node (T4) at (5,10) {$H_p^{(j)}(\dL_n)$};
       \node (T5) at (5,12) {$H_p^{(j)}(\dL_{n+1})$};
        \node (T6) at (5,14) {$\vdots$};  

          \node (U1) at (7,5) {$H_p^{(j+1)}(\dL)$};
    \node (U2) at (7,7) {$H_p^{(j+1)}(\dL_1)$};
     \node (U3) at (7,9) {$\vdots$};
      \node (U4) at (7,11) {$H_p^{(j+1)}(\dL_n)$};
       \node (U5) at (7,13) {$H_p^{(j+1)}(\dL_{n+1})$};
        \node (U6) at (7,15) {$\vdots$};

                 \node (V1) at (9,6) {\reflectbox{$\ddots$}};
    \node (V2) at (9,8) {\reflectbox{$\ddots$}};
     \node (V3) at (9,10) {\reflectbox{$\ddots$}};
      \node (V4) at (9,12) {\reflectbox{$\ddots$}};
       \node (V5) at (9,14) {\reflectbox{$\ddots$}};
        \node (V6) at (9,16) {};  

 \draw (Q)--(Q1) {};
    \draw (Q1)--(Q2) {};
     \draw (Q2)--(Q3){};
    \draw (Q3)--(Q4) {};
 \draw (Q4)--(Q5) {};
  \draw (Q5)--(Q6) {};

   \draw (P1)--(P2) {};
     \draw (P2)--(P3){};
    \draw (P3)--(P4) {};
 \draw (P4)--(P5) {};
  \draw (P5)--(P6) {};

   \draw (R1)--(R2) {};
     \draw (R2)--(R3){};
    \draw (R3)--(R4) {};
 \draw (R4)--(R5) {};
  \draw (R5)--(R6) {};

   \draw (T1)--(T2) {};
     \draw (T2)--(T3){};
    \draw (T3)--(T4) {};
 \draw (T4)--(T5) {};
  \draw (T5)--(T6) {};

   \draw (U1)--(U2) {};
     \draw (U2)--(U3){};
    \draw (U3)--(U4) {};
 \draw (U4)--(U5) {};
  \draw (U5)--(U6) {};

\draw (Q1)--(P1) {};
\draw (P1)--(R1) {};
\draw (T1)--(U1) {};

\draw (Q2)--(P2) {};
\draw (P2)--(R2) {};
\draw (T2)--(U2) {};

\draw (Q4)--(P4) {};
\draw (P4)--(R4) {};
\draw (T4)--(U4) {};

\draw (Q5)--(P5) {};
\draw (P5)--(R5) {};
\draw (T5)--(U5) {};
  
    \end{tikzpicture}\]

\par Iwasawa studied asymptotic growth patterns for the degrees $[H_p(\dL_n):\dL_n]$ considered along the tower that is the second column (from the left margin) in the above diagram. In the spirit of Iwasawa theory, we consider natural growth questions for the pro-$p$ groups $G_{n}$ as $n\rightarrow \infty$. We define the \emph{exponential growth number} $\rho(G)$ of a finitely generated pro-$p$ group $G$, which is a natural invariant associated with its Hilbert series. In this context, there is a natural analogy with the Hilbert series of an algebraic variety. Let $\Omega(G)$ denote the mod-$p$ Iwasawa algebra associated to $G$, defined as the inverse limit $\varprojlim_U \F_p[G/U]$, where $U$ ranges over all finite index normal subgroups of $G$. Let $I_G$ be the augmentation ideal of $\Omega(G)$, and for $n\geq 0$, set $c_n(G):=\op{dim}_{\F_p}\left(I_G^n/I_G^{n+1}\right)$, where it is understood that $c_0(G):=1$. Let \[H(G;t)=\sum_{n\geq 0} c_n(G) t^n\] denote the Hilbert series associated to $G$, cf. Definition \ref{def of Hilbert series}. If $G$ is finite, then $c_n(G)=0$ for large enough values of $n$. When $G$ is a $p$-adic analytic group of dimension $d>0$, we find that $c_n(G)=O(n^d)$ (cf. \cite[section 12.3, p. 307]{dixonetal}). On the other hand, if a group $G$ has infinite rank, the numbers $c_n(G)$ grow at an exponential rate (cf. Proposition 12.17 of \emph{loc. cit.}). In practice, the Galois groups that arise from infinite $p$ Hilbert class field towers may not be $p$-adic analytic groups. For many of the Galois groups constructed in this article, the numbers $c_n(G)$ increase at an exponential rate. The radius of convergence of $H(G;t)$ is given by $R_G=\rho(G)^{-1}$, where $\rho(G):=\limsup_{n\rightarrow \infty} |c_n(G)|^{\frac{1}{n}}$. It is understood that $R_G:=\infty$ when $\rho(G)=0$, and in this case, $H(G, t)$ is an \emph{entire function}. The constant $\rho(G)$ measures the exponential growth of subgroups in the Zassenhaus filtration of $G$. It is clear that if $\rho(G)>1$, then $G$ is not an analytic pro-$p$ group. It is shown, in various contexts that pro-$p$ Hilbert class field towers are infinite via an application of the Golod-Shafarevich-Vinberg criterion (cf. \cite[Theorem 1.2]{hajir2020infinite}). This strategy is developed and employed in the following works: \cite{venkov1978p, schoof1986infinite, kisilevsky1987sufficient, joshi2011infinite, hajir2020infinite, hajir2021cutting}. The class of pro-$p$ groups for which it can be shown that $\rho(G)>1$, via the Golod-Shafarevich-Vinberg criterion have special properties, and are known as Golod-Shafarevich groups (cf. Definition \ref{def of GS groups}). We refer to \cite{ershov2012golod} for an introduction to the theory of Golod-Shafarevich groups. 

\par Given a number field $L$, its \emph{root discriminant} is defined to be $D_L^{1/n}$, where $D_L$ is its absolute discriminant, and $n=[L:\Q]$. It is an old question as to whether there exists an infinite tower of number fields, unramified away from a finite set of primes, for which the root discriminant is bounded. Since the root discriminant is constant in unramified extensions, the above question is related to the constants of Martinet \cite{martinet1978tours} and Odlyzko's bounds \cite{odlyzko1990bounds}. In the number field extensions $L/\dL_n$ such that $L\subset \cF(\dL_n)$, the root discriminant remains bounded. Hajir and Maire \cite{hajir2001tamely} construct tamely ramified Golod-Shafarevich Galois groups and are able to improve upon Martinet's constants. These root discriminant bounds are further refined by Hajir, Maire and Ramakrishna in \cite{hajir2021cutting}. One is thus interested in constructing infinite unramified Galois pro-$p$ groups $G$, such that $\rho(G)$ is large. In this paper, we study the following question.
\begin{question}
Let $\dL$ be a number field and $p$ be a prime number. Given a $\Z_p$-extension $\dL_\infty/\dL$, what can one say about the growth of $\rho(G_{n})$, as $n\rightarrow \infty$?
\end{question}
\subsection{Main results} We consider a variant of the above question, for certain natural $\Z_p$-extensions in which one of the primes above $p$ is infinitely split. For $k\geq 0$, let $\cF^{[k]}(\dL_n)$ be the maximal pro-$p$ extension of $\dL_n$ for which
\begin{itemize}
\item all primes $v\nmid p$ are unramified, 
\item all decomposition groups at primes $v|p$ are abelian, 
\item for all primes $v\mid p$, every element of the decomposition group of $v$ has order dividing $p^k$.
\end{itemize}
Note that $\cF(\dL_n)\supseteq \cF^{[0]}(\dL_n)$. For $k\geq 1$, there is finite ramification in the extension $\cF^{[k]}(\dL_n)/\dL_n$. As a consequence, there is an intermediate extension $\dL_n\subseteq \dL_n'\subseteq \cF^{[k]}(\dL_n)$, such that $\dL_n'/\dL_n$ is a finite extension, and $\cF^{[k]}(\dL_n)/\dL_n'$ is unramified (cf. the proof of \cite[Proposition 1.5]{hajir2020infinite}). In particular, for all number fields contained in $\cF^{[k]}(\dL_n)$, the root discriminant is bounded. We shall set $\cG^{[k]}(\dL_n):=\op{Gal}(\cF^{[k]}(\dL_n)/\dL_n)$, and $\rho^{[k]}(\dL_n):=\rho\left(\cG^{[k]}(\dL_n)\right)$.

\par Let $\dL$ be a number field extension of $\Q$ and let $p$ be an odd prime. Let $r_1$ (resp. $r_2$) be the number of real embeddings (resp. complex embeddings) of $\dL$. Let $\p=\p_1, \p_2, \dots, \p_g$ be the primes of $\dL$ that lie above $p$, and assume that $g\geq 2$. Set $e_i:=e(\p_i/p)$ (resp. $f_i:=f(\p_i/p)$) to denote the ramification index (resp. inertial degree) of $\p_i$ over $p$. Note that $d_i:=[\dL_{\p_i}:\Q_p]=e_i f_i$. Let $U_i:=\cO_{\dL_{\p_i}}^\times$ and $U_i^{(1)}$ the principal local units in $U_i$. We set $\mathfrak{U}$ to denote the product $\prod_{i=2}^g U_i^{(1)}$, and $\bar{E}$ to denote the $p$-adic closure of the image of the group of global units $\cO_{\dL}^\times$ in $\mathfrak{U}$. Setting $\delta:=\op{rank}_{\Z_p}(\bar{E})$, we note that $\delta \leq \op{rank} \cO_{\dL}^\times =r_1+r_2-1$. We set \[m:=\op{rank}_{\Z_p}\left(\mathfrak{U}/\bar{E}\right)-1=[\dL:\Q]-d_1-\delta-1,\] and assume that $m\geq 1$. It follows from a standard application of global class field theory that there exists a $\Z_p$-extension $\dL_\infty/\dL$ in which $\p$ is completely split. Note that when $\dL$ is totally imaginary, the condition $m\geq 1$ is automatically satisfied when $[\dL:\Q]\geq 2(d_1+1)$. 

\begin{theorem}\label{thm 1.2}
Assume that the following conditions are satisfied
\begin{enumerate}
\item $p$ is odd and there are $g>1$ primes of $\dL$ that lie above $p$,
 \item\label{p1 of thm 1.2} $\dL$ contains $\mu_p$, the $p$-th roots of unity,
    \item\label{p2 of thm 1.2} $[\dL:\Q]
    \geq 2\left(d_1+1\right)$,
    \item\label{p4 of thm 1.2} $([\dL:\Q]+2)^2> 8\left(\sum_{i=2}^g d_i^2\right)$.
\end{enumerate} 
 Then, there exists a $\Z_p$-extension $\dL_\infty/\dL$ in which $\p_1$ is totally split. Moreover, there exists a constant $C>0$ (independent of $n$ and $k$) and $n_0\in \Z_{\geq 0}$, such that for all $n\geq n_0$ and $k\geq 1$, we have that
\[\rho^{[k]}(\dL_n)\geq Cp^n.\]
Let $\mu(\dL_\infty/\dL)$ denote the Iwasawa $\mu$-invariant for the $\Z_p$-extension $\dL_\infty/\dL$. Let $T_1$ be the number of primes of $\dL$ that lie above $p$, that are infinitely decomposed in $\dL_\infty$.
The constant $C$ can be chosen to be any value such that
\[0< C < \frac{2\left(\sum_{i=2}^g d_i^2\right)}{\left(2 T_1+[\dL:\Q]+2\mu(\dL_\infty/\dL)\right)}.\]
\end{theorem}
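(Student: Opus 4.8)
The plan is to produce, for each layer $\dL_n$ with $n$ large, an abundance of relations (generators-to-relators deficit control) on a Galois group closely related to $\cG^{[k]}(\dL_n)$ so that the Golod--Shafarevich--Vinberg criterion applies with a quantitative bound, and then convert that into a lower bound on the exponential growth number $\rho$. Concretely, I would first fix, via global class field theory and the hypotheses $g>1$ and $m\geq 1$, the $\Z_p$-extension $\dL_\infty/\dL$ in which $\p_1$ is totally split; this is the extension whose existence is asserted in the paragraph preceding the theorem. The key local input is that $\p_1$ splits completely in $\dL_\infty$, so at the $n$-th layer the primes above $p$ that originate from $\p_1$ contribute $p^n$ places, each of local degree $d_1$, while the places above $\p_2,\dots,\p_g$ may partially decompose but contribute at most controlled local degrees summing (with multiplicity) in a way governed by $[\dL_n:\Q]=p^n[\dL:\Q]$ and by $T_1$. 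The presence of $\mu_p$ in $\dL$ (hence in every $\dL_n$, since these are $p$-extensions of $\dL$) is what makes the relevant Galois cohomology groups $H^2$ computable and, via the global Euler characteristic formula and local/global duality, lets us bound the number of relations by the number of generators minus a positive multiple of $p^n$.

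The heart of the argument is the Golod--Shafarevich estimate at the $n$-th layer. I would consider the group $\cG^{[k]}(\dL_n)$ — or more precisely a closely related quotient in which the decomposition groups at primes over $p$ are forced abelian of exponent dividing $p^k$ — and write down a minimal presentation $1\to R\to F\to \cG^{[k]}(\dL_n)\to 1$ with $F$ free pro-$p$ on $d=\dim H^1$ generators and $r=\dim H^2$ relations (or the appropriate restricted version counting the local exponent-$p^k$ conditions as extra generators/relations). Using the inflation-restriction sequence, Poitou--Tate duality, and the global Euler characteristic formula over $\dL_n$, one obtains $d - r \geq$ (something like) $r_2(\dL_n) + (\text{split contribution}) - \sum_{\text{bad places}} d_v - (\text{class group / }\mu\text{ correction})$. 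Here $r_2(\dL_n)=p^n r_2$, the split primes above $\p_1$ contribute roughly $p^n d_1$ with a favorable sign because they are totally split, and the error terms from $\p_2,\dots,\p_g$ are bounded by $p^n\sum_{i=2}^g d_i$ in the worst case; the $\mu$-invariant of $\dL_\infty/\dL$ enters as the $p^n\mu$ term controlling the growth of the $p$-part of the class group along the tower (Iwasawa's theorem). Hypothesis (\ref{p4 of thm 1.2}), $([\dL:\Q]+2)^2>8\sum_{i=2}^g d_i^2$, is precisely the inequality ensuring that, after feeding these counts into the Golod--Shafarevich--Vinberg polynomial inequality $1 - dt + rt^2$ (evaluated at a suitable $t=t_0$), one gets a negative value, which both forces infiniteness and, via Vinberg's refinement, pins down a root $t_0<1$ with $\rho\geq t_0^{-1}$; tracking the dependence of $t_0$ on the $p^n$-scaled coefficients yields $\rho^{[k]}(\dL_n)\geq Cp^n$ with $C$ the explicit constant $2(\sum_{i=2}^g d_i^2)/(2T_1+[\dL:\Q]+2\mu(\dL_\infty/\dL))$ stated in the theorem.

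The final step is bookkeeping: one checks that all the local degree sums and the $\mu$-term scale linearly in $p^n$ (the $n\lambda+\nu$ lower-order terms from Iwasawa's formula are absorbed into the choice of $n_0$ and do not affect the leading constant), that $C$ is genuinely independent of both $n$ and $k$ — independence of $k$ because the exponent-$p^k$ local conditions only ever add finitely many (in fact $O(p^n)$, with the same leading behavior) generators and relations, so the ratio determining $t_0$ stabilizes — and that the strict inequality defining the allowed range of $C$ is exactly what one reads off from solving $1-dt+rt^2<0$ for the largest admissible $t_0$. I expect the main obstacle to be the careful cohomological computation of $\dim H^1$ and $\dim H^2$ for the restricted Galois group $\cG^{[k]}(\dL_n)$: one must correctly account for the imposed abelianness and exponent conditions at the primes above $p$ (these modify the local terms in the Euler characteristic and in Poitou--Tate), handle the split primes above $\p_1$ separately from the possibly-inert-or-partially-split primes above $\p_2,\dots,\p_g$, and ensure the $\mu$-invariant correction is inserted with the correct constant — this is where the factor $2\mu(\dL_\infty/\dL)$ and the term $2T_1$ in the denominator come from, and getting these constants exactly right (rather than merely up to a bounded factor) is the delicate part.
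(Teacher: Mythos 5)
Your overall strategy---bound generator and relation counts at each layer, show the Golod--Shafarevich polynomial is negative at a suitable $t_0\in(0,1)$, invoke the quantitative refinement $\rho\geq t_0^{-1}$ (Proposition \ref{lower bound on rho(G)}), and use Iwasawa's theorem to control the class-group term via $\mu(\dL_\infty/\dL)$---is the same as the paper's. But there is a genuine gap at the decisive quantitative step: your bookkeeping is linear in the local degrees (``error terms bounded by $p^n\sum_{i=2}^g d_i$''), and with both $d$ and $r$ of size $O(p^n)$ your accounting cannot produce either hypothesis \eqref{p4 of thm 1.2} or the stated constant $C$, both of which are quadratic in the $d_i$. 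The mechanism you are missing is this: the primes $\p_2,\dots,\p_g$ are only finitely decomposed in $\dL_\infty$, so at level $n$ a place $v\mid \p_i$ ($i\geq 2$) has local degree roughly $p^nd_i$ and its pro-$p$ decomposition group needs $n_v\leq p^nd_i+2$ generators; forcing it to be \emph{abelian} requires cutting by $\binom{n_v}{2}\approx\tfrac12 p^{2n}d_i^2$ commutators, each of depth $\geq 2$. Hence the quadratic coefficient of the bounding polynomial is $R_n=\tfrac12\bigl(\sum_{i\geq 2}d_i^2\bigr)p^{2n}+O(p^n)$, while the linear coefficient $D_n=d(n)$ is only $O(p^n)$ (the $H^2$-term $r(n)$ is likewise $O(p^n)$ and is \emph{not} the source of the quadratic growth). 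Evaluating at the vertex $t_n=D_n/(2R_n)\sim p^{-n}$, negativity for large $n$ is exactly condition \eqref{p4 of thm 1.2} (i.e.\ $A>4B$ with $A=(1+[\dL:\Q]/2)^2$, $B=\tfrac12\sum_{i\geq2}d_i^2$), and $\rho^{[k]}(\dL_n)\geq t_n^{-1}=2R_n/D_n$ combined with the upper bound $d(n)\leq p^n\bigl(T_1+[\dL:\Q]/2+\mu(\dL_\infty/\dL)\bigr)+o(p^n)$ gives precisely the admissible range for $C$ (this is also where $T_1$ and $\mu$ enter). Without identifying this $p^{2n}$-scaled supply of relations, no choice of $t_0$ in your scheme yields the claimed constant.

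A secondary problem is your proposed route to the counts: you plan to compute $\dim H^1$ and $\dim H^2$ of the restricted group $\cG^{[k]}(\dL_n)$ directly via Poitou--Tate and the Euler characteristic formula, and you yourself flag this as the delicate point. The paper deliberately avoids it, since such formulas are not available for groups cut out by ``abelian decomposition of exponent dividing $p^k$'' conditions. Instead it computes cohomology only for the $S$-ramified group $\op{G}_S(\dL_n)$ (Theorem \ref{thm 2.2}, following Neukirch--Schmidt--Wingberg) and passes to $\cG^{[k]}(\dL_n)$ by the cutting technique of Hajir--Maire--Ramakrishna: each added local relation $y$ contributes $t^{\omega_F(y)}$ to the Golod--Shafarevich polynomial, commutators contributing $t^2$ and the $p^k$-th powers having depth $\geq p^k\geq p$, so their total contribution $R_n't^{p^k}\leq R_n't^{p}$ with $R_n'=O(p^n)$ is negligible at $t_n$---this depth argument, not a crude count of extra generators and relations, is what makes the bound uniform in $k$. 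Relatedly, hypothesis \eqref{p2 of thm 1.2} is used only to guarantee $m\geq 1$ and hence the existence of the split-prime $\Z_p$-extension; the totally split prime $\p_1$ contributes $g_1(n)=p^n$ to both $d(n)$ and $r(n)$, not a ``favorable'' term of size $p^nd_1$ as you suggest.
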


\begin{remark}
    \begin{itemize}
        \item We observe that the hypotheses imply that 
\[\frac{2\left(\sum_{i=2}^g d_i^2\right)}{\left(2T_1+[\dL:\Q]+2\mu(\dL_\infty/\dL)\right)}< \left(\frac{[\dL:\Q]+2}{4}\right).\]
\item Note that the condition \eqref{p1 of thm 1.2} implies that $\dL$ is totally imaginary and \eqref{p2 of thm 1.2} implies that $m\geq 1$.
    \end{itemize}
\end{remark}

 We prove Theorem \ref{thm 1.2} by adapting the strategy of Hajir, Maire and Ramakrishna \cite{hajir2020infinite}. The following result illustrates Theorem \ref{thm 1.2}.

\begin{corollary}\label{main cor}
Let $p\geq 3$ and $\ell\geq 11$ be distinct primes and set $\dL:=\Q(\mu_{p\ell})$. Let $\p$ be any prime of $\dL$ that lies above $p$. Assume that there are $g=(\ell-1)$ primes of $\dL$ that lie above $p$, i.e., $\ell\equiv 1\mod{p}$. Then, there exists a $\Z_p$-extension $\dL_\infty/\dL$ in which $\p$ is totally split. Let $T_1$ be the number of primes of $\dL$ that lie above $p$, that are infinitely decomposed in $\dL_\infty$. For any constant \[0<C<\frac{2(\ell-2)(p-1)^2}{\left( 2 T_1 +(p-1)(\ell-1)+2\mu(\dL_\infty/\dL) \right)},\] we have that 
\[\rho^{[k]}(\dL_n)\geq Cp^n\] for all $k\geq 1$, and all large enough values of $n$.
\end{corollary}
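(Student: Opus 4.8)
The plan is to deduce the corollary directly from Theorem \ref{thm 1.2}: one verifies that $\dL=\Q(\mu_{p\ell})$ satisfies hypotheses (1)--(4) of that theorem, and then substitutes the explicit arithmetic data of $\dL$ into the displayed range for the constant $C$.

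First I would compute the local and global invariants of $\dL$. Since $\Q(\mu_{p\ell})=\Q(\mu_p)\cdot\Q(\mu_\ell)$ with the two cyclotomic factors linearly disjoint over $\Q$, we have $[\dL:\Q]=\varphi(p\ell)=(p-1)(\ell-1)$, and hypothesis \eqref{p1 of thm 1.2} holds trivially since $\mu_p\subseteq\dL$. For the primes above $p$: the subextension $\Q(\mu_p)/\Q$ is totally ramified at $p$ with ramification index $p-1$, whereas $\Q(\mu_\ell)/\Q$ is unramified at $p$; hence each prime $\p_i\mid p$ of $\dL$ has ramification index $e_i=p-1$ and residue degree $f_i$ equal to the multiplicative order of $p$ modulo $\ell$. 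The standing assumption that there are $g=\ell-1$ primes above $p$ forces this order to equal $1$, so $f_i=1$ and $d_i=e_if_i=p-1$ for every $i$; in particular $d_1=p-1$ and $\sum_{i=2}^g d_i^2=(g-1)(p-1)^2=(\ell-2)(p-1)^2$. This also secures the first hypothesis of Theorem \ref{thm 1.2}, since $g=\ell-1>1$.

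Next I would verify the two remaining numerical constraints. Condition \eqref{p2 of thm 1.2} reads $(p-1)(\ell-1)\geq 2p$, which is immediate from $\ell\geq 11$, because then $(p-1)(\ell-1)\geq 10(p-1)\geq 2p$ for all $p\geq 2$. Condition \eqref{p4 of thm 1.2} reads $\big((p-1)(\ell-1)+2\big)^2>8(\ell-2)(p-1)^2$; writing $a:=p-1$ and $b:=\ell-1$, this rearranges to $a^2(b^2-8b+8)+4ab+4>0$, and since $b\geq 10$ gives $b^2-8b+8\geq 28>0$, the inequality holds for every $a\geq 1$. This elementary bookkeeping, together with the local-degree computation of the previous paragraph, is the only point at which any real care is needed; I do not anticipate a genuine obstacle here.

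Finally, Theorem \ref{thm 1.2}, applied with $\p$ playing the role of $\p_1$, produces a $\Z_p$-extension $\dL_\infty/\dL$ in which $\p$ is totally split, together with the asymptotic lower bound $\rho^{[k]}(\dL_n)\geq Cp^n$ valid for all $k\geq 1$ and all sufficiently large $n$, for any constant
\[0<C<\frac{2\left(\sum_{i=2}^g d_i^2\right)}{2T_1+[\dL:\Q]+2\mu(\dL_\infty/\dL)}.\]
Substituting $\sum_{i=2}^g d_i^2=(\ell-2)(p-1)^2$ and $[\dL:\Q]=(p-1)(\ell-1)$ will recover precisely the range
\[0<C<\frac{2(\ell-2)(p-1)^2}{2T_1+(p-1)(\ell-1)+2\mu(\dL_\infty/\dL)}\]
asserted in the corollary, which completes the argument.
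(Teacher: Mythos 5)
Your proposal is correct and takes essentially the same route as the paper: one checks hypotheses (1)--(4) of Theorem \ref{thm 1.2} for $\dL=\Q(\mu_{p\ell})$, using $e_i=p-1$, $f_i=1$ (forced by $g=\ell-1$), hence $d_i=p-1$, $[\dL:\Q]=(p-1)(\ell-1)$ and $\sum_{i=2}^g d_i^2=(\ell-2)(p-1)^2$, and then substitutes these values into the bound on $C$ from the theorem. Your verification of the numerical conditions (e.g.\ rewriting condition (4) as $a^2(b^2-8b+8)+4ab+4>0$) is only a slightly more explicit version of the paper's check that $(\ell-1)^2>8(\ell-2)$ for $\ell\geq 11$, so there is no substantive difference.
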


We also define another notion measuring the \emph{size} $m(G)$ of a Golod-Shafarevich group $G$ (cf. Definition \ref{defn size}). We prove an analogous result for the growth of the numbers $m^{[k]}(\dL_n):=m\left(\cG^{[k]}(\dL_n)\right)$, cf. Theorem \ref{thm 3.6}.

\subsection{Organization} Including the introduction, the manuscript consists of three sections. In section \ref{s 2}, we introduce preliminary notions. In greater detail, we develop the Golod-Shafarevich theory of pro-$p$ groups. We prove an explicit criterion (cf. Proposition \ref{lower bound on rho(G)}) which gives an explicit lower bound for the exponential growth number $\rho(G)$ for a Golod-Shafarevich group $G$. In section \ref{s 3}, we apply the results from section \ref{s 2} to prove Theorem \ref{thm 1.2}, Corollary \ref{main cor} and Theorem \ref{thm 3.6}.

\subsection{Acknowledgment} This project was started when the third author was at the Centre de recherches mathematiques, Montreal. At the time, the third author's research was supported by the CRM-Simons fellowship. We are very thankful to Katharina M\"uller and Ravi Ramakrishna for numerous helpful comments. We thank the anonymous referee for the excellent report. 

\section{Pro-$p$ groups and their Hilbert series}\label{s 2}
\par We review some preliminaries and set up some basic notation in this section. Throughout, $p$ shall be an odd prime. Let $G$ be a finitely generated pro-$p$ group, set $\Omega(G)$ to denote the completed group algebra of $G$ over $\F_p$. More precisely, $\Omega(G)$ is defined to be the inverse limit
\[\Omega(G):=\varprojlim_U \F_p[G/U],\] where $U$ runs over all finite index normal subgroups of $G$. We refer to $\Omega(G)$ as the \emph{mod-$p$ Iwasawa algebra} of $G$. Many properties of the group $G$ are captured by algebraic properties of $\Omega(G)$. We consider the \emph{Hilbert series} associated with $\Omega(G)$. Given a normal subgroup $U$ of $G$, there is a natural map $\iota_U:G\rightarrow \F_p[G/U]$, which sends $g\in G$ to the element $\bar{g}\cdot 1$. Here, $\bar{g}$ is the congruence class of $g$ modulo $U$. The map $\iota:G\rightarrow \Omega(G)$ is the inverse limit $\iota:=\varprojlim_U \iota_U$. The augmentation map $\alpha:\Omega(G)\rightarrow \F_p$ maps each element of the form $\iota(g)$ to $1$. We shall, by abuse of notation, simply let $g\in \Omega(G)$ simply denote the element $\iota(g)$. 
 Set $I_G$ to denote the augmentation ideal of $\Omega(G)$, i.e., the kernel of the augmentation map. For $n\geq 1$, $\Omega(G)/I_G^n$ is a finite dimensional $\F_p$-vector space. 

\begin{definition}\label{def of Hilbert series}
For $n\geq 1$, let $c_n(G):=\op{dim}_{\F_p} \left(I_G^{n}/I_G^{n+1}\right)$, and set $c_0(G):=1$. The Hilbert series $H(G;t)$ is a formal power series defined by $H(G;t):=\sum_{n\geq 0} c_n(G) t^n$. Setting $\rho(G):=\limsup_{n\rightarrow \infty} |c_n(G)|^{\frac{1}{n}}$, we note that the radius of convergence of $H(G;t)$ is given by \[R_G:=\begin{cases}
     {\rho(G)}^{-1} & \text{ if }\rho(G)<\infty;\\
     0 & \text{ if }\rho(G)=\infty.\\
\end{cases}\]
\end{definition}
Given $x\in G$ such that $x\neq 1$, the \emph{depth} of $x$ is defined as follows
\[\omega_G(x):=\op{max}\{n\mid x-1\in I_G^n\}.\]We set $\omega_G(1):=\infty$.

\begin{definition}The \emph{Zassenhaus filtration} is defined as follows
\[G_n:=\{g\in G\mid \omega_G(g)\geq n\}.\]
\end{definition}
The sequence $G_n$ is a sequence of open normal subgroups of $G$. The quotient $G_n/G_{n+1}$ is a finite dimensional $\F_p$ vector space, set $a_n(G):=\op{dim}_{\F_p}\left(G_n/G_{n+1}\right)$.
The quantity $\rho(G)$ measures the order of exponential growth of the quotients $\left(I_G^n/I_G^{n+1}\right)$ as $n\rightarrow \infty$. These numbers are related to the growth of groups in the Zassenhaus filtration. There is an explicit relationship between the numbers $\{a_n(G)\}$ and $\{c_n(G)\}$, established by Min\'a\v c, Rogelstad and T\^ an \cite{minavc2016dimensions}.

\par Assume that $G$ is a finitely presented pro-$p$ group, and let 
\[\begin{split}& d(G):=\op{dim}_{\F_p} H^1(G, \F_p),\\ 
& r(G):=\op{dim}_{\F_p} H^2(G, \F_p).\end{split}\]
\begin{remark}\label{boring remark}
    We identify $H^1(G, \F_p)$ with $\op{Hom}\left(\frac{G}{[G,G]G^p}, \F_p\right)$, and thus,
\[d(G)=\dim_{\F_p}\left(\frac{G}{[G,G]G^p}\right).\] If $\mathcal{G}$ is a quotient of $G$, then, it is clear that $d(\mathcal{G})\leq d(G)$.
\end{remark}
Consider a minimal presentation
\begin{equation}\label{minpres}1\rightarrow R\rightarrow F\xrightarrow{\varphi} G\rightarrow 1\end{equation} of $G$. Here, $F=\langle \sigma_1, \dots, \sigma_d\rangle$ is a free pro-$p$ group generated by $d=d(G)$ elements. The subgroup $R$ of $F$ is the normal subgroup $\langle \rho_1, \dots, \rho_r\rangle^{\op{Norm}}$ generated by $r=r(G)$ elements. Given a choice of minimal presentation of $G$, we define the associated Golod-Shafarevich polynomials. Let $\Omega(F)$ be the mod-$p$ Iwasawa algebra associated to $F$, and $I_F$ be the augmentation ideal of $\Omega(F)$. By a theorem of Lazard \cite{lazard1965groupes}, the algebra $\Omega(F)$ is isomorphic to the algebra of power series $\F_p\langle \langle u_1, \dots, u_d\rangle \rangle$. Here $u_i$ is identified with the element $(\sigma_i-1)$. The augmentation ideal $I_F$ is generated by $u_1, \dots, u_d$. Let $\omega_F$ be the depth function on $F$, defined by setting 
\[\omega_F(x):=\op{max}\{n \mid x-1\in I_F^n\},\] for $x\neq 1$, and $\omega_F(1):=\infty$. The map $\varphi$ induces a surjection $\Omega(F)\rightarrow \Omega(G)$, whose kernel we denote by $J$. Identify $\Omega(G)$ with the quotient $\Omega(F)/J$ and $I_G$ with $I_F/J$. The depth filtration $\omega_G$ is related to $\omega_F$ as follows
\[\omega_G(x)=\op{max}\{\omega_F(y)\mid \varphi(y)=x\},\] cf. \cite[Lemma 3.4 and Theorem 3.5, pp. 204-205]{lazard1965groupes} for further details.

\begin{proposition}\label{prop 2.1}
The depth function $\omega_F: F\rightarrow \Z_{\geq 1}\cup \{\infty\}$ and associated filtration $\{G_n\}_n$ satisfies the following properties 
\begin{enumerate}
\item for $x\in F$, $\omega_F(x^p)\geq p\omega_F(x)$. Therefore, if $x\in G_n$, then, $x^p\in G_{np}$.
\item For $x\in G_n$ and $y\in G_m$, one has $[x,y]\in G_{n+m}$. 
\item For $x,y\in G$, we have that $\omega_F(xy)\geq \op{min}(\omega_F(x), \omega_F(y))$. 
\end{enumerate}
\end{proposition}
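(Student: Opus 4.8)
The plan is to translate all three claims into elementary computations inside the augmentation ideal $I_F \subset \Omega(F)$, using Lazard's isomorphism $\Omega(F) \cong \F_p\langle\langle u_1,\dots,u_d\rangle\rangle$ with $u_i$ identified with $\sigma_i - 1$. Under this identification $I_F$ is the two-sided ideal of power series with vanishing constant term, $I_F^n$ is the set of series all of whose monomials have total degree $\geq n$, and for $x \neq 1$ the depth $\omega_F(x)$ equals the least degree occurring in $x - 1$; equivalently $x \in G_n \iff x - 1 \in I_F^n$. Throughout I will write $\xi := x - 1$ and $\eta := y - 1$, both in $I_F$, and I will repeatedly use that $I_F^n$ is a two-sided ideal and that $\Omega(F)$ is an $\F_p$-algebra.

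For (3) I would first note $xy - 1 = \xi + \eta + \xi\eta$. Putting $a = \omega_F(x)$, $b = \omega_F(y)$ (with the trivial cases $x = 1$ or $y = 1$ set aside), we have $\xi \in I_F^a$, $\eta \in I_F^b$, and $\xi\eta \in I_F^{a+b}$; since $a, b \geq 1$ each of these ideals lies in $I_F^{\min(a,b)}$, whence $xy - 1 \in I_F^{\min(a,b)}$ and $\omega_F(xy) \geq \min(\omega_F(x), \omega_F(y))$. For (1), with $a = \omega_F(x)$ and $\xi \in I_F^a$: since $1$ and $\xi$ commute, the binomial theorem gives $x^p - 1 = (1+\xi)^p - 1 = \sum_{k=1}^{p} \binom{p}{k}\xi^k$, and as $\binom{p}{k} \equiv 0 \pmod p$ for $1 \leq k \leq p - 1$ while $\Omega(F)$ has characteristic $p$, this collapses to $x^p - 1 = \xi^p \in I_F^{pa}$. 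Hence $\omega_F(x^p) \geq p\,\omega_F(x)$, and in particular $x \in G_n$ forces $\omega_F(x^p) \geq pn$, i.e.\ $x^p \in G_{pn}$.

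For (2), the key identity I would use is $[x,y] - 1 = x^{-1}y^{-1}(xy - yx)$, which holds in $\Omega(F)$ by direct expansion, since $x^{-1}y^{-1}(xy - yx) = [x,y] - x^{-1}y^{-1}yx = [x,y] - 1$. Now $xy - yx = (1+\xi)(1+\eta) - (1+\eta)(1+\xi) = \xi\eta - \eta\xi$, which lies in $I_F^{a+b}$ where $a = \omega_F(x)$, $b = \omega_F(y)$. Since $x^{-1}y^{-1}$ is a unit of $\Omega(F)$ and $I_F^{a+b}$ is a two-sided ideal, multiplying on the left keeps us in $I_F^{a+b}$, so $\omega_F([x,y]) \geq \omega_F(x) + \omega_F(y)$; for $x \in G_n$ and $y \in G_m$ this reads $[x,y] \in G_{n+m}$.

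These are classical facts, essentially due to Zassenhaus and Lazard (the filtration $\{G_n\}$ is of the type classically called an $N_p$-series), so I do not expect a real obstacle. The only points needing a little care are the non-commutativity of $\Omega(F)$ --- which is why in (2) I arrange for the ring commutator $\xi\eta - \eta\xi$ to be conjugated only by the single unit $x^{-1}y^{-1}$ --- and the essential use of the characteristic of $\Omega(F)$ being $p$ in (1), which is exactly what kills the intermediate binomial coefficients. Alternatively one may simply invoke \cite{lazard1965groupes}, but the computation above is short enough to record in full.
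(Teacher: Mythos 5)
Your argument is correct, but it takes a different route from the paper: the paper does not prove Proposition \ref{prop 2.1} directly, it simply cites \cite[Propositions 1 and 2]{hajir2021cutting} and \cite[Section 7.4]{koch2002galois}, whereas you give the classical Zassenhaus--Lazard computation in full. Your three identities check out: $xy-1=\xi+\eta+\xi\eta$ gives (3); $(1+\xi)^p-1=\xi^p$ in characteristic $p$ (using that $1$ is central) gives (1); and $[x,y]-1=x^{-1}y^{-1}(xy-yx)=x^{-1}y^{-1}(\xi\eta-\eta\xi)$, together with the fact that $I_F^{a+b}$ is a two-sided ideal and $x^{-1}y^{-1}$ is a unit, gives (2). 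One small remark worth making explicit: none of these manipulations actually uses the power-series model $\Omega(F)\cong\F_p\langle\langle u_1,\dots,u_d\rangle\rangle$ or the ``least degree'' description of depth --- they only use that the depth of $x$ is the largest $n$ with $x-1\in I^n$, that the $I^n$ are two-sided ideals in an $\F_p$-algebra, and that group elements are units. Consequently the identical computation applies with $I_G\subset\Omega(G)$ in place of $I_F\subset\Omega(F)$, which is the cleaner way to get the assertions about the Zassenhaus filtration $\{G_n\}$ of $G$ itself (the paper's statement mixes $\omega_F$ with $\{G_n\}$); invoking Lazard's isomorphism is harmless but superfluous. What your approach buys is a short, self-contained verification in place of an external reference; what the citation buys the paper is brevity and a pointer to the standard sources where these facts (and the sharper equality $\omega_F(x^p)=p\,\omega_F(x)$ in the free case, which is implicitly used later in the proof of Theorem \ref{thm 1.2}) are established.
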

\begin{proof}
The above mentioned result is \cite[Proposition 1 and 2]{hajir2021cutting}, also see \cite[Section 7.4]{koch2002galois}. 
\end{proof}

\begin{definition}The Golod-Shafarevich polynomial associated with the minimal presentation \eqref{minpres} is defined as follows 
\[P(G;t):=1-dt+\sum_{i=1}^{r}  t^{\omega_F(\rho_i)}.\]
\end{definition}
Note that since the filtration is minimal, the depth $\omega_F(\rho_i)\geq 2$, cf. \cite[p.3, l. -9]{hajir2020infinite}. 
\begin{theorem}[Vinberg's criterion]\label{GS inequality}Let $G$ be a pro-$p$ group and let $H(G;t)$ be the Hilbert series associated to $G$. Choose a minimal presentation \eqref{minpres} of $G$ and let $P(G;t)$ be the associated Golod-Shafarevich polynomial. Recall that $R_G$ is the radius of convergence of $H(G;t)$, and set $R_G':=\op{min}\{1, R_G\}$. Then, for any value $t\in (0, R_G')$, the following inequality is satisfied
\[H(G;t) P(G;t)\geq 1.\]
\end{theorem}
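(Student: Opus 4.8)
The plan is to establish the inequality $H(G;t)P(G;t) \geq 1$ for $t \in (0, R_G')$ by working inside the mod-$p$ Iwasawa algebra $\Omega(F)$ of the free pro-$p$ group $F$ and tracking the filtration degrees of a carefully chosen generating set of the kernel $J = \ker(\Omega(F) \twoheadrightarrow \Omega(G))$. The first step is to recall that $J$ is generated as a closed two-sided ideal by the elements $\rho_i - 1$, $i = 1, \dots, r$, each of depth $\omega_F(\rho_i) =: m_i \geq 2$. Concretely, one shows that $J = \sum_{i=1}^r \Omega(F)(\rho_i - 1)\Omega(F)$, which uses the fact that $R = \langle \rho_1, \dots, \rho_r\rangle^{\mathrm{Norm}}$ together with the identities expressing $(\rho^g - 1)$ and $(\rho\rho' - 1)$ in terms of the $(\rho_i - 1)$; this is where the filtration behaviour from Proposition \ref{prop 2.1} feeds in, ensuring that the generators one adjoins never have smaller depth.

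Next I would set up the key exact sequence of graded $\F_p$-vector spaces. Write $\mathrm{gr}(\Omega(F)) = \bigoplus_n I_F^n/I_F^{n+1}$, a free associative graded algebra on $d$ generators in degree $1$, so its Hilbert series is $\frac{1}{1 - dt}$. The quotient map gives $\mathrm{gr}(\Omega(G)) = \mathrm{gr}(\Omega(F))/\mathrm{gr}(J)$, whose Hilbert series is exactly $H(G;t)$. The graded ideal $\mathrm{gr}(J)$ is generated (as a two-sided ideal of $\mathrm{gr}(\Omega(F))$) by the leading forms $\bar\rho_i$ of $\rho_i - 1$ in degree $m_i$. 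Now consider the surjection of graded $\mathrm{gr}(\Omega(F))$-modules
\[
\bigoplus_{i=1}^r \mathrm{gr}(\Omega(F))(-m_i) \otimes_{\F_p} \mathrm{gr}(\Omega(F)) \longrightarrow \mathrm{gr}(J),
\]
sending the $i$-th generator to $\bar\rho_i$. Comparing Hilbert series across this surjection yields, term by term, the coefficient-wise inequality
\[
[\mathrm{gr}(\Omega(F))] \cdot \left(1 - \sum_{i=1}^r t^{m_i}\right) \leq [\mathrm{gr}(\Omega(G))],
\]
i.e. $\frac{1}{1-dt}\bigl(1 - \sum_i t^{m_i}\bigr) \leq H(G;t)$ as formal power series (coefficient by coefficient). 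Rearranging gives $H(G;t)(1 - dt) \geq 1 - \sum_i t^{m_i}$, hence $H(G;t)\bigl(1 - dt + \sum_i t^{m_i}\bigr) \geq 1 + H(G;t) \cdot dt \cdot \text{(correction)}$ — more precisely one gets $H(G;t)P(G;t) \geq 1$ once one clears denominators correctly, because $H(G;t)(1-dt) \geq 1 - \sum_i t^{m_i}$ is equivalent to $H(G;t)(1 - dt + \sum_i t^{m_i}) \geq 1$ after adding $H(G;t)\sum_i t^{m_i}$ to both sides and using $H(G;t) \geq \sum_{n} c_n(G) t^n$ with $c_n \geq 0$. The convergence hypothesis $t \in (0, R_G')$ is what makes all these manipulations of honest convergent series (with nonnegative terms, so the coefficient-wise inequality upgrades to a numerical one) legitimate; the restriction $t < 1$ additionally guarantees $\frac{1}{1-dt}$ behaves well and that the tail estimates do not diverge.

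The main obstacle I anticipate is the justification that $\mathrm{gr}(J)$ is generated in the graded algebra by precisely the leading forms $\bar\rho_i$ with no lower-degree or extra generators appearing — equivalently, that passing to associated graded commutes with forming the two-sided ideal generated by the $\rho_i - 1$. This is the crux of the Golod–Shafarevich–Vinberg argument and is exactly where the structure of $\Omega(F) \cong \F_p\langle\langle u_1, \dots, u_d\rangle\rangle$ as a (complete) free algebra, together with the depth compatibility $\omega_G(x) = \max\{\omega_F(y) : \varphi(y) = x\}$ recalled from Lazard, must be used. One has to argue that every element of $J$ of filtration degree $n$ can be written modulo $I_F^{n+1}$ as an $\F_p$-combination of terms $a \bar\rho_i b$ with $\deg a + m_i + \deg b = n$; this follows from a careful induction on filtration degree using the explicit generation of $J$ established in the first step, but it requires care because in a noncommutative complete algebra one must ensure the "tails" can be absorbed. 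Once this is in hand, the Hilbert-series comparison and the final rearrangement into the form $H(G;t)P(G;t) \geq 1$ are routine formal manipulations of power series with nonnegative coefficients.
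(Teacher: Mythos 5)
The paper does not actually prove this theorem: its ``proof'' is a citation to Vinberg and to Ershov's survey, so the relevant comparison is with the standard Golod--Shafarevich--Vinberg argument. Your attempt does not succeed, and the failure is exactly at the step you flag as the crux. The claim that $\mathrm{gr}(J)$ is generated, as a two-sided ideal of $\mathrm{gr}(\Omega(F))$, by the initial forms $\bar\rho_i$ is false in general: passing to the associated graded does not commute with forming the ideal generated by given elements, because cancellation of leading terms creates elements of $J$ whose initial forms lie outside the ideal generated by the $\bar\rho_i$ (one would need the $\rho_i-1$ to be a standard/Gr\"obner-type basis of $J$, which a minimal presentation does not provide). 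For instance, if two relators $\rho_1-1,\rho_2-1\in\F_p\langle\langle u_1,u_2\rangle\rangle$ have expansions $u_1^2+\cdots$ and $u_1^2+u_2^3+\cdots$, then $J$ contains an element with initial form $u_2^3$, which is not in the two-sided ideal generated by $u_1^2$; no induction on filtration degree can repair this. Moreover, even granting your surjection, the bookkeeping is off: the displayed source $\bigoplus_i \mathrm{gr}(\Omega(F))(-m_i)\otimes_{\F_p}\mathrm{gr}(\Omega(F))$ has Hilbert series $\bigl(\sum_i t^{m_i}\bigr)/(1-dt)^2$, which only yields $H(G;t)\ \geq\ \tfrac{1}{1-dt}-\tfrac{\sum_i t^{m_i}}{(1-dt)^2}$, and this does not imply $H(G;t)P(G;t)\geq 1$. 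The inequality you actually use, $\tfrac{1-\sum_i t^{m_i}}{1-dt}\leq H(G;t)$ coefficientwise, is simply false: for $G=\Z_p\times\Z_p$ (so $d=2$, one relator of depth $2$, $c_n(G)=n+1$) it would force exponential growth of $c_n(G)$. Finally, ``rearranging'' a coefficientwise inequality by multiplying with $1-dt$ is not legitimate, since $1-dt$ has a negative coefficient.

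The correct route (Vinberg's, as in the cited references) never compares $\mathrm{gr}(J)$ with an ideal of initial forms in the free algebra. Instead one proves the coefficientwise inequality $c_n(G)\geq d\,c_{n-1}(G)-\sum_i c_{n-m_i}(G)$ for $n\geq 1$, in which the correction terms are Hilbert coefficients of $\Omega(G)$ itself, not of $\Omega(F)$. This follows either from a direct filtration count based on the identity $J=\overline{\textstyle\sum_i (\rho_i-1)\Omega(F)+I_F J}$ (so $J$ modulo $\overline{I_FJ}$ is spanned by the $\rho_i-1$ together with deeper elements), or homologically from the partial free resolution $\bigoplus_{i=1}^r \Omega(G)\rightarrow\bigoplus_{j=1}^d \Omega(G)\rightarrow\Omega(G)\rightarrow\F_p\rightarrow 0$ attached to the minimal presentation, keeping track of the filtration shifts $m_i$ and $1$ and taking alternating sums of dimensions of filtration quotients. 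Summing the resulting coefficient inequalities against $t^n$, with all $c_n(G)\geq 0$ and $t\in(0,R_G')$ guaranteeing absolute convergence, gives $H(G;t)P(G;t)\geq 1$. If you want a self-contained proof, replace your graded-ideal comparison by this filtered (or homological) argument; as written, the key lemma is false and the theorem does not follow.
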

\begin{proof}
This result is well known. We refer to \cite[Theorem 2.1]{ershov2012golod} and \cite{vinberg1965theorem} for a proof of the above statement.
\end{proof}
The next result shows that if $P(G;t)$ takes a negative value $t_0$ in the interval $(0,1)$, then, $\rho(G)>1$. Furthermore, it gives us a lower bound for $\rho(G)$, and can therefore be viewed as a refinement of the Golod-Shafarevich-Vinberg criterion (cf. \cite[Theorem 1.2]{hajir2020infinite}).
\begin{proposition}\label{lower bound on rho(G)}
Suppose that for some $t_0\in (0,1)$, the value $P(G;t_0)$ is negative. Then, the order of growth satisfies the lower bound $\rho(G)\geq \frac{1}{t_0}$. 
\end{proposition}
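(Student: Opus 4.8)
The plan is to combine Vinberg's criterion (Theorem \ref{GS inequality}) with the hypothesis that $P(G;t_0)<0$ to derive a contradiction unless $R_G' \le t_0$, and then to translate this bound on the radius of convergence into the claimed lower bound on $\rho(G)$. First I would observe that, since all coefficients $c_n(G)$ are non-negative, the Hilbert series $H(G;t)$ is a power series with non-negative real coefficients, hence it is positive and increasing on $(0, R_G)$; in particular $H(G;t)\ge c_0(G)=1>0$ there. Likewise $P(G;t)$ is continuous on $[0,1]$.

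Next I would argue by contradiction: suppose $\rho(G) < 1/t_0$, equivalently $R_G > t_0$. Since $t_0 \in (0,1)$, we also have $R_G' = \min\{1, R_G\} > t_0$, so $t_0 \in (0, R_G')$. Then Theorem \ref{GS inequality} applies at $t = t_0$ and yields $H(G;t_0)\,P(G;t_0) \ge 1$. But $H(G;t_0) > 0$ (it is a convergent series with positive leading term $1$ and non-negative coefficients), while $P(G;t_0) < 0$ by hypothesis, so the product $H(G;t_0)P(G;t_0)$ is strictly negative, contradicting the inequality $\ge 1$. Hence $R_G \le t_0$, i.e. $\rho(G) \ge 1/t_0$.

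One small point to handle carefully is the boundary behaviour: Vinberg's criterion is stated for $t\in(0,R_G')$, an open interval, so I must ensure $t_0$ lies strictly inside it, which is exactly why the contradiction hypothesis is taken as the strict inequality $R_G > t_0$ rather than $R_G \ge t_0$. If instead $R_G = t_0$ one gets the desired conclusion $\rho(G) = 1/t_0 \ge 1/t_0$ directly, and if $R_G < t_0$ then $\rho(G) > 1/t_0$, so in all cases the inequality $\rho(G)\ge 1/t_0$ holds. A second routine check is that $P(G;t_0)$ being negative forces $G$ to be infinite (otherwise $H(G;t)$ is a polynomial and one would still run the same argument, but in fact $\rho(G)>1$ already precludes finiteness), though this is not strictly needed for the statement as phrased.

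I do not anticipate a serious obstacle here: the only subtlety is the interplay between the open interval in Vinberg's criterion and the $\limsup$ definition of $\rho(G)$ via the radius of convergence, and this is dispatched by the case analysis above. The argument is essentially a one-line consequence of Theorem \ref{GS inequality} together with positivity of $H(G;t)$ on the interior of its disc of convergence.
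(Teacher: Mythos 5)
Your argument is correct and is essentially the paper's own proof: both use positivity of $H(G;t)$ on $(0,R_G')$ together with Vinberg's criterion to force $t_0\geq R_G'$, hence $R_G\leq t_0<1$ and $\rho(G)\geq 1/t_0$; you merely phrase it as a contradiction and spell out the boundary cases slightly more explicitly.
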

\begin{proof}
Let $R_G$ be the radius of convergence of the Hilbert series $H(G;t)$ and set $R_G':=\op{min}\{1, R_G\}$. By definition, the coefficients of $H(G;t)$ are all non-negative. Then, for $t\in (0,R_G')$, the series $H(G;t)$ converges absolutely to a positive value. By Vinberg's criterion, $P(G;t)H(G;t)\geq 1$ for all $t\in (0, R_G')$. Since $P(G;t_0)<0$, it follows that $H(G;t)$ does not converge absolutely at $t=t_0$. In other words, $t_0$ lies outside the domain of convergence. Therefore, $t_0\geq R_G'$. Since $t_0<1$, it follows that $R_G=R_G'<1$ and therefore, $t_0\geq R_G=\rho(G)^{-1}$. Therefore, we conclude that $\rho(G)\geq \frac{1}{t_0}$. 
\end{proof}

\begin{definition}\label{def of GS groups}
    A pro-$p$ group $G$ is said to be a \emph{Golod-Shafarevich group} if for some minimal presentation 
    \[1\rightarrow R\rightarrow F\xrightarrow{\varphi} G\rightarrow 1,\]
    there is a value $t_0\in (0,1)$ so that $P(G;t_0)<0$. Proposition \ref{lower bound on rho(G)} shows that if $G$ is a Golod-Shafarevich group with $P(G;t_0)<0$, then the order of exponential growth satisfies $\rho(G)\geq t_0^{-1}>1$. In particular $G$ is infinite since the numbers $c_n(G)$ grow at an exponential rate as $n\rightarrow \infty$.
\end{definition}

At this point, we introduce a new definition which measures the \emph{size} of a Golod-Shafarevich group. Let $G$ be a Golod-Shafarevich group and 
\[1\rightarrow R\rightarrow F\xrightarrow{\varphi} G\rightarrow 1\] be a minimal presentation of $G$. We consider quotients $G'$ of $G$ such that $d(G')=d(G)$. Suppose that $G'=G/\langle x_1, \dots, x_{m}\rangle^{\op{Norm}}$. Then, following \cite[p.4, ll. 10-17]{hajir2020infinite}, we get a new minimal presentation as follows. Lift each $x_i$ to $y_i\in F$, and let $R'=\langle \rho_1, \dots, \rho_r, y_1, \dots, y_{m}\rangle^{\op{Norm}}$ to be the normal subgroup of $F$ generated by $R$ and the elements $y_1, \dots, y_m$. Since it is assumed that $d(G')=d(G)$, this gives a minimal presentation of $G'$. In particular, note that $\omega_F(y_i)\geq 2$ for all $i=1, \dots, m$. Following \emph{loc. cit.}, we say that we have \emph{cut} the group $G$ by the elements $y_1, \dots, y_m$. Let $\varphi':F\rightarrow G'$ be the composite of $\varphi$ with the quotient map $G\rightarrow G'$. With respect to the new presentation
\[1\rightarrow R'\rightarrow F\xrightarrow{\varphi'}  G'\rightarrow 1,\] we find that for $t\in (0,1)$,  
\[P(G';t)\leq 1-d(G)t+r(G)t^2+\sum_{i=1}^m t^{\omega_F(y_i)}.\] 

\begin{definition}
    Let $G$ be a Golod-Shafarevich group, and choose a minimal presentation for $G$ such that $P(G;t)$ attains a negative value on $(0,1)$. We say that a set $\{y_1, \dots, y_m\}\subset F$ is an admissible cutting datum if $\omega_F(y_i)\geq 2$ for all $i$.
\end{definition}

Given an admissible cutting datum $\{y_1,\dots, y_m\}$, set $x_i:=\varphi(y_i)$ and \[G':=G/\langle x_1, \dots, x_m\rangle^{\op{Norm}}.\]

\begin{definition}\label{defn size}
    Let $G$ be a Golod-Shafarevich group, and choose a minimal presentation \[\mathfrak{M}: 1\rightarrow R\rightarrow F\rightarrow G\rightarrow 1\] for $G$ such that $P(G;t)$ attains a negative value on $(0,1)$. We define $m_{\mathfrak{M}}(G)$ to be the smallest value $m\in \Z_{\geq 0}$ such that there exists an admissible cutting datum $\{y_1, \dots, y_{m+1}\}$, such that $P(G';t)\geq 0$ for all $t\in (0, 1)$. If no such $m$ exists, we set $m_{\mathfrak{M}}(G):=\infty$. We set 
    \[m(G):=\op{min} \{m_{\mathfrak{M}}(G)\mid \mathfrak{M}\},\]
    where the minimum is taken over all minimal presentations $\mathfrak{M}$ of $G$. 
\end{definition}

\begin{remark}
    We note that the definition of $m(G)$ does not, to our knowledge, appear in the literature prior to this. 
\end{remark}
Thus, for any admissible cutting datum $\{y_1, \dots, y_k\}$, with $k\leq m(G)$, the associated quotient $G'$ is a Golod-Shafarevich group, and thus in particular is infinite. Like $\rho(G)$, the number $m(G)$ gives one a measure of the size of a Golod-Shafarevich group $G$. 
\section{Growth asymptotics for split prime $\Z_p$-extensions}\label{s 3}

\par Let $\dL$ be a number field which satisfies the conditions of Theorem \ref{thm 1.2}. In this section, $S_p$ (resp. $S_\infty$) denote the primes of $\dL$ that lie above $p$ (resp. $\infty$). Set $S$ to denote the union. Let $L/\dL$ be a Galois number field extension and let $S(L)$ (resp. $S_p(L)$) be the set of places of $L$ which lie above $S$ (resp. $S_p$). We shall denote by $L_S$ the maximal pro-$p$ algebraic extension of $L$ in which the primes $w\notin S(L)$ are unramified. We denote by $\op{G}_S(L)$ the Galois group $\op{Gal}(L_S/L)$. Set $H_S(L)$ to be the maximal abelian unramified extension of $L$ in which the places $v\in S(L)$ are split. The $S$ class group of $L$ is the Galois group $\op{Cl}_S(L):=\op{Gal}(H_S(L)/L)$, and is identified with a quotient of the class group $\op{Cl}(L)$. For $v\in S_p$, let $(e_v(L/\dL), f_v(L/\dL), g_v(L/\dL))$ denote the ramification invariants of $v$ in $L$. In other words, $v$ splits into $g_v(L/\dL)$ primes in $\cO_L$, each with ramification index $e_v(L/\dL)$. The quantity $f_v(L/\dL):=[\cO_{L}/w: \cO_K/v]$ is independent of the choice of prime $w|v$ of $L$. We note that $e_v(L/\dL) f_v(L/\dL) g_v(L/\dL)=[L:\dL]$. 
\begin{theorem}\label{thm 2.2}
Let $L/\dL$ be a finite Galois extension. With respect to the above notation, the following relations hold
\[\begin{split}&\op{dim}_{\F_p}H^1(\op{G}_S(L), \F_p)=\sum_{i=1}^g g_{\p_i}(L/\dL)+\frac{[L:\Q]}{2}+\op{dim} \left(\op{Cl}_S(L)\otimes \F_p\right),\\
& \op{dim}_{\F_p}H^2(\op{G}_S(L), \F_p)=\sum_{i=1}^g g_{\p_i}(L/\dL)-1+\op{dim} \left(\op{Cl}_S(L)\otimes \F_p\right).
\end{split}\]
\end{theorem}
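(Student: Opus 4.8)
The plan is to derive both formulas from the well-known presentation theory of $S$-ramified Galois groups of number fields, as developed in Koch's book \cite{koch2002galois} and in the Hajir–Maire–Ramakrishna paper \cite{hajir2020infinite}, where $S$ contains the primes above $p$ and above $\infty$. The first cohomological dimension $d(\op{G}_S(L)) = \dim_{\F_p} H^1(\op{G}_S(L),\F_p)$ is computed by class field theory: it equals the $\F_p$-dimension of the maximal elementary abelian $p$-extension of $L$ unramified outside $S(L)$ and split above $S(L)$, sitting on top of the full $S$-ray class field. More precisely, I would invoke the standard formula (Koch, Chapter 11; see also \cite[Theorem 1.6]{hajir2020infinite}) which, under the hypothesis $\mu_p \subseteq L$ (inherited from $\mu_p \subseteq \dL$ in Theorem \ref{thm 1.2}), gives
\[
d(\op{G}_S(L)) = 1 - r_1(L) - r_2(L) + \sum_{w \in S(L)} \big(\text{local degree contribution}\big) + \dim_{\F_p}\big(\op{Cl}_S(L)\otimes\F_p\big) + (\text{correction from } S_\infty).
\]
The point is to simplify the local terms: since $L$ is totally imaginary ($\mu_p \subseteq L$), we have $r_1(L) = 0$ and $r_2(L) = [L:\Q]/2$, and the archimedean places contribute nothing beyond this; the finite places $w \mid p$ contribute, after cancellation against the global unit rank via the idelic exact sequence, exactly $\sum_{i=1}^g g_{\p_i}(L/\dL)$ — one generator per prime of $L$ above $p$ — which together with the $r_2(L) = [L:\Q]/2$ term and the $S$-class group term yields the asserted formula for $H^1$.

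For $H^2$, the main input is that $\op{G}_S(L)$ has a balanced-type presentation: by work of Koch and the general theory (again \cite[Theorem 1.6]{hajir2020infinite}, \cite[Section 11.3]{koch2002galois}), when $\mu_p \subseteq L$ one has the relation
\[
\dim_{\F_p} H^2(\op{G}_S(L),\F_p) = \dim_{\F_p} H^1(\op{G}_S(L),\F_p) - \Big(\sum_{w\in S(L)}\dim_{\F_p} H^0(G_w,\F_p) - \dim_{\F_p}H^0(\op{G}_S(L),\F_p)\Big) + \dim_{\F_p}H^1(\op{G}_S(L),\F_p(1))^{\vee}\text{-type terms},
\]
which under the present hypotheses collapses: the global Euler characteristic formula for $\op{G}_S(L)$ (using $\mu_p\subseteq L$ so that the "plus part" at infinity behaves predictably, and $p$ odd) gives $r(\op{G}_S(L)) - d(\op{G}_S(L)) = -\,\frac{[L:\Q]}{2} + \sum_{i=1}^g g_{\p_i}(L/\dL) - 1 - \sum_{i=1}^g g_{\p_i}(L/\dL)$ after accounting for the local terms, i.e. $r = d - \frac{[L:\Q]}{2} - 1$, and substituting the $H^1$ formula produces $\sum_{i=1}^g g_{\p_i}(L/\dL) - 1 + \dim(\op{Cl}_S(L)\otimes\F_p)$. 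I would carry this out by writing down Koch's exact sequence relating $H^2(\op{G}_S(L),\F_p)$ to the dual Selmer group and the local $H^2$'s, then noting that local $H^2(G_w,\F_p) \cong \F_p$ for $w \mid p$ (local duality, $\mu_p$ present) and vanishes for $w \mid \infty$ since $p$ is odd.

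The steps in order: (1) recall the idelic/class-field-theoretic description of the maximal elementary abelian $p$-quotient of $\op{G}_S(L)$ and read off $d(\op{G}_S(L))$; (2) use $\mu_p \subseteq L$ to set $r_1(L)=0$, $r_2(L) = [L:\Q]/2$, and simplify the unit contributions; (3) identify the surviving local generators with the $g_{\p_i}(L/\dL)$; (4) invoke the global Euler–Poincaré characteristic formula for $\op{G}_S(L)$ over $\F_p$ (valid since $S \supseteq S_p \cup S_\infty$, $p$ odd), which gives $d - r$ in terms of local $H^0$ and $H^2$ terms; (5) combine to extract the formula for $r(\op{G}_S(L))$. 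The main obstacle I anticipate is bookkeeping the $S$-split condition correctly: because $H_S(L)$ is the maximal abelian unramified extension split at \emph{all} $v \in S(L)$ (not just unramified), the relevant class group is $\op{Cl}_S(L)$ rather than $\op{Cl}(L)$, and one must be careful that the decomposition-at-$S$ constraints are exactly what turns the usual "$+ |S|$" type terms into the "$+\sum g_{\p_i}$, $-1$" terms — i.e. ensuring the local Euler factors and the splitting conditions are not double-counted. Once the idelic exact sequence is set up with the $S$-split local conditions imposed, the rest is linear algebra over $\F_p$.
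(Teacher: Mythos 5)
Your outline is sound and reaches the correct formulas, but note that the paper disposes of this statement in one line: Theorem \ref{thm 2.2} is quoted as a special case of Neukirch--Schmidt--Wingberg, Theorem 10.7.3, which is exactly the Koch-style generator/relation-rank formula for $\op{G}_S(L)$ that you propose to re-derive. So in substance you are reconstructing the proof of the cited theorem rather than giving a different argument: your steps (1)--(3) are the class field theory computation of the maximal elementary abelian $p$-quotient, where the local term $\sum_{w\mid p}\dim_{\F_p}(L_w^\times\otimes\F_p)=[L:\Q]+2\sum_{i}g_{\p_i}(L/\dL)$ is reduced by the $S$-unit contribution $r_1+r_2+|S_p(L)|$ (using $\mu_p\subset L$, hence $r_1=0$ and $r_2=[L:\Q]/2$), and your steps (4)--(5) amount to the global Euler--Poincar\'e characteristic identity $h^1-h^2=1+r_2$ for $\F_p$-coefficients with $p$ odd, which indeed matches the difference of the two displayed formulas. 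The citation buys you that all the bookkeeping is already packaged; your derivation buys transparency about where each term comes from. If you write it out, two points need genuine care rather than the placeholder expressions in your displays: (i) the Euler characteristic formula is proved for the Galois group of the maximal extension of $L$ unramified outside $S$, not for its maximal pro-$p$ quotient, so to apply it to $\op{G}_S(L)$ you must invoke the comparison of $H^i(\cdot,\F_p)$ for $i\leq 2$ between these two groups, which holds because $S\supseteq S_p\cup S_\infty$; (ii) the map $\cO_{L,S}^\times\otimes\F_p\to\prod_{w\in S_p(L)}L_w^\times\otimes\F_p$ need not be injective, and the clean appearance of $\op{Cl}_S(L)\otimes\F_p$ (the $S$-split class group, not $\op{Cl}(L)$) comes from identifying the relevant Kummer group $\{a\in L^\times: a\in L_w^{\times p}\ \forall w\in S(L),\ w(a)\equiv 0 \bmod p\ \forall w\notin S(L)\}/L^{\times p}$ with $\op{Hom}\left(\op{Cl}_S(L)\otimes\F_p,\mu_p\right)$ via Kummer theory, using $\mu_p\subset L$ and $S_p\subseteq S$; this is precisely the double-counting issue you flag, and it is resolved in Koch's book and inside the proof of the NSW theorem the paper cites.
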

\begin{proof}
The above is a special case of \cite[Theorem 10.7.3]{neukirch2013cohomology}. 
\end{proof}
We order the set of primes above $p$ such that $\p_i$ is infinitely decomposed $\dL_\infty$ for $i\in [1, T_1]$ and infinitely ramified for $i\in [T_1+1, T_2]$. The primes $\p_i$ for $i\in [T_2+1, g]$ are unramified and finitely decomposed in $\dL_\infty$. We shall set $g_i(n)$, $e_i(n)$ and $f_i(n)$ to denote $g_{\p_i}(\dL_n/\dL)$, $e_{\p_i}(\dL_n/\dL)$ and $f_{\p_i}(\dL_n/\dL)$ respectively. We find that

\begin{equation}
\label{ramification invariants}\begin{split}& g_{1}(n)=p^{n} , e_{1}(n)=f_{1}(n)=1,\\
& g_{i}(n)\leq p^{n} \text{ for }i\in [2, T_1],\\
& g_{i}(n)=O(1)\text{ for }i>T_1.
\end{split}\end{equation}
We set 
\[d(n):=\op{dim}_{\F_p}H^1(\op{G}_S(\dL_n), \F_p)\text{ and }r(n):=\op{dim}_{\F_p}H^2(\op{G}_S(\dL_n), \F_p).\]

Before stating the next result, we clarify some standard conventions with regard to our notation. Let $f, g, h: \Z_{\geq 0}\rightarrow \mathbb{R}_{\geq 0}$ be non-negative functions. We write $f(n)=g(n)+O(h(n))$ to mean that there is a positive constant $C>0$, independent of $n$, such that for all $n\in \mathbb{N}$, $|f(n)-g(n)|\leq C h(n)$. We write $f(n)\leq g(n) +O(h(n))$ (resp. $f(n)\geq g(n) +O(h(n))$) to mean that $f(n)\leq g(n)+C h(n)$ (resp. $f(n)\geq g(n)-C h(n)$) for some absolute constant $C>0$.

\begin{corollary}\label{corollary bounds on d(n) and r(n)}
    The following relations hold
    \[\begin{split} & d(n)\geq  p^n\left(1+\frac{[\dL:\Q]}{2}\right)\\ 
 & d(n)\leq p^n\left(T_1+\frac{[\dL:\Q]}{2}+\mu(\dL_\infty/\dL)\right)+o(p^n),\\ 
 &r(n)=  O(p^n).
\end{split}\]
\end{corollary}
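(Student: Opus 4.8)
The plan is to substitute $L=\dL_n$ into Theorem~\ref{thm 2.2} and estimate the three contributions in the resulting formulas — the split-prime count $\sum_{i=1}^g g_{\p_i}(\dL_n/\dL)$, the archimedean term $\tfrac12[\dL_n:\Q]$, and the $S$-class group term $\dim_{\F_p}\!\left(\op{Cl}_S(\dL_n)\otimes\F_p\right)$ — one at a time. Since $[\dL_n:\Q]=p^n[\dL:\Q]$, the archimedean term equals $p^n[\dL:\Q]/2$ exactly. For the split-prime count, I would read off the ramification data recorded in \eqref{ramification invariants}: $g_1(n)=p^n$ exactly (as $\p_1$ is totally split in $\dL_\infty$), $g_i(n)\le p^n$ for $2\le i\le T_1$, and $g_i(n)=O(1)$ for $i>T_1$, whence
\[p^n\ \le\ \sum_{i=1}^g g_{\p_i}(\dL_n/\dL)\ \le\ T_1 p^n+O(1).\]

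The lower bound on $d(n)$ is then immediate: discarding the non-negative terms $\sum_{i\ge 2}g_i(n)$ and $\dim_{\F_p}(\op{Cl}_S(\dL_n)\otimes\F_p)$ in the formula of Theorem~\ref{thm 2.2} and keeping $g_1(n)+\tfrac12[\dL_n:\Q]=p^n\!\left(1+[\dL:\Q]/2\right)$ gives exactly the claimed inequality. The substantive point is the upper bound, which requires controlling $\dim_{\F_p}(\op{Cl}_S(\dL_n)\otimes\F_p)$. Here I would use that $\op{Cl}_S(\dL_n)$ is a quotient of the full class group $\op{Cl}(\dL_n)$, so $\dim_{\F_p}(\op{Cl}_S(\dL_n)\otimes\F_p)\le\dim_{\F_p}\!\left(\op{Cl}(\dL_n)/p\right)$, together with the elementary fact that a finite abelian $p$-group of order $p^e$ has $p$-rank at most $e$. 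Writing $p^{e_n}=h_p(\dL_n)$, Iwasawa's theorem applied to the $\Z_p$-extension $\dL_\infty/\dL$ gives $e_n=p^n\mu(\dL_\infty/\dL)+O(n)$ (the finitely many small $n$ being absorbed into the implied constant), hence $\dim_{\F_p}(\op{Cl}_S(\dL_n)\otimes\F_p)\le p^n\mu(\dL_\infty/\dL)+o(p^n)$. Combining the three estimates,
\[d(n)\ \le\ T_1 p^n+O(1)+p^n\frac{[\dL:\Q]}{2}+p^n\mu(\dL_\infty/\dL)+o(p^n)\ =\ p^n\!\left(T_1+\frac{[\dL:\Q]}{2}+\mu(\dL_\infty/\dL)\right)+o(p^n).\]

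For $r(n)$, the formula of Theorem~\ref{thm 2.2} gives $0\le r(n)=\sum_{i=1}^g g_{\p_i}(\dL_n/\dL)-1+\dim_{\F_p}(\op{Cl}_S(\dL_n)\otimes\F_p)$, and the two bounds already established ($\sum_i g_i(n)\le T_1 p^n+O(1)$ and the class-group term $\le p^n\mu(\dL_\infty/\dL)+o(p^n)$) show $r(n)\le Cp^n$ for a suitable constant $C$, i.e.\ $r(n)=O(p^n)$. I expect the only genuine subtlety to be the appeal to Iwasawa's growth formula for a possibly non-cyclotomic $\Z_p$-extension and the (routine, but worth stating) inequality bounding the $p$-rank of a finite abelian $p$-group by its $p$-adic valuation; the rest is bookkeeping with the $O(\cdot)$ and $o(\cdot)$ conventions fixed just before the statement.
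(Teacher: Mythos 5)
Your proposal is correct and follows essentially the same route as the paper: substitute $L=\dL_n$ into Theorem \ref{thm 2.2}, use the ramification data \eqref{ramification invariants} to bound $\sum_i g_{\p_i}(\dL_n/\dL)$, and control $\dim_{\F_p}\left(\op{Cl}_S(\dL_n)\otimes\F_p\right)$ via Iwasawa's theorem (which indeed holds for an arbitrary $\Z_p$-extension, so your flagged subtlety is harmless). You merely spell out the intermediate steps (the quotient map $\op{Cl}(\dL_n)\twoheadrightarrow\op{Cl}_S(\dL_n)$ and the $p$-rank $\leq e_n$ bound) that the paper leaves implicit.
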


\begin{proof}
   It follows from Iwasawa's theorem that $\op{dim}_{\F_p}\left(\op{Cl}(\dL_n)\otimes \F_p\right)\leq \mu(\dL_\infty/\dL)p^n +o(p^n)$. The result follows therefore as an immediate consequence of Theorem \ref{thm 2.2} and \eqref{ramification invariants}.
\end{proof}

\par For $n\geq 0$ and $v\in S_p(\dL_n)$, set $n_v:=\op{dim}_{\F_p}\left(\dL_{n,v}^\times\otimes \F_p\right)$. We choose an embedding of \[\iota:\widebar{\dL}_n\hookrightarrow \widebar{\dL}_{n, v},\] or equivalently, a prime $v'$ of $\widebar{\dL}_n$ that lies above $v$. We shall refer to the decomposition group of $v'|v$ as the decomposition group at $v$. The inclusion $\iota$ prescribes an inlusion of $\op{Gal}(\bar{\dL}_{n, v}/\dL_{n, v})$ into $\op{G}_{\dL_n}$, and the image of this embedding is identified with the decomposition group at $v$. Note that the pro-$p$ completion of the decomposition group at $v$ is generated by $n_v$ elements. For ease of notation, we set $d_i:=[\dL_{\p_i}:\Q_p]$. We note that since $\mu_p$ is contained in $\dL$,
\begin{equation}\label{n_v def equation}\begin{split}n_v=\op{dim}_{\F_p}\left(\dL_{n,v}^\times\otimes \F_p\right)=& [\dL_{n,v}:\Q_p]+2,\\=& \left(e_{i}(n)f_{i}(n)[\dL_{\p_i}:\Q_p]+2\right)\text{ if }v|\p_i,\\
& \begin{cases}= (d_1+2)\text{ if }v|\p_1, \\
\leq (p^{n}d_i+2)\text{ if }v|\p_i\text{ for }i\geq 2.\end{cases} \\
\end{split}\end{equation} 
Let $\widetilde{\cL}_n$ be the maximal pro-$p$ extension of $\dL_n$ unramified at all primes $v\notin S(\dL_n)$. Denote by $\cL_n$ the maximal pro-$p$ extension of $\dL_n$ unramified at all primes $v\notin S(\dL_n)$ and such that for all primes $v\in S_p(\dL_n)$, the decomposition group is abelian. For $k\geq 0$, let $\cF^{[k]}(\dL_n)$ be the maximal subfield of $\cL_n$ in which all the elements in the decomposition groups at primes $v\in S_p(\dL_n)$ have order dividing $p^k$. It is thus understood that $\cF^{[0]}(\dL_n)$ is the maximal unramified pro-$p$ extension of $\dL_n$ in which all primes $v\in S(\dL_n)$ are completely split. We have the following containments 
\[\dL_n\subseteq \cF^{[0]}(\dL_n)\subseteq \cF^{[1]}(\dL_n)\subseteq\dots \subseteq \cF^{[k]}(\dL_n)\subseteq\cF^{[k+1]}(\dL_n)\subseteq\dots \subseteq \cL_n\subseteq \widetilde{\cL}_n.\]
Set $\widetilde{\cG}_n$, $\cG_n$ and $\cG_n^{[k]}=\cG^{[k]}(\dL_n)$ denote the Galois groups $\op{Gal}(\widetilde{\cL}_n/\dL_n)$, $\op{Gal}({\cL}_n/\dL_n)$ and $\op{Gal}(\cF^{[k]}(\dL_n)/\dL_n)$ respectively. We begin with a minimal filtration 
\begin{equation}\label{filtration 1}1\rightarrow R\rightarrow F\xrightarrow{\varphi} \widetilde{\cG}_n\rightarrow 1\end{equation}of $\widetilde{\cG}_n$. The group $\cG_n$ is obtained from $\widetilde{\cG}_n$ upon cutting by the commutators of the generators of all decomposition groups for primes $v\in S_p(\dL_n)$. In greater detail, for each prime $v\in S_p(\dL_n)$, let $z_1^{(v)}, \dots, z_{n_v}^{(v)}$ be a set of elements in $F$ mapping to a set of generators of the decomposition group at $v$. For $1\leq i<j\leq n_v$, we let $z_{i,j}^{(v)}\in F$ denote the commutator $[z_i^{(v)}, z_j^{(v)}]$. It follows from Proposition \ref{prop 2.1} that \begin{equation}\label{omega eqn}\omega(z_{i,j}^{(v)})\geq \omega(z_i^{(v)})+\omega(z_j^{(v)})\geq 2.\end{equation} We represent $\cG_n$ as the quotient  \[\cG_n=\frac{\widetilde{\cG}_n}{\langle \varphi\left(z_{i,j}^{(v)}\right)\mid 1\leq i<j \leq n_v, v\in S_p(\dL_n)\rangle^{\op{Norm}}},\] and we obtain a new presentation
\begin{equation}\label{filtration 2}1\rightarrow R'\rightarrow F\rightarrow \cG_n\rightarrow 1,\end{equation}where, 
\[R'=R\langle z_{i,j}^{(v)}\mid 1\leq i<j \leq n_v, v\in S_p(\dL_n)\rangle^{\op{Norm}}.\]
Going modulo the $p^k$-th powers of all generators of decomposition groups, we obtain a presentation 
\begin{equation}\label{filtration 3}1\rightarrow R''\rightarrow F\rightarrow \cG_n^{[k]}\rightarrow 1,\end{equation}
where, 
\[R''=R'\langle \left(z_{i}^{(v)}\right)^{p^k}\mid 1\leq i\leq n_v, v\in S_p(\dL_n)\rangle^{\op{Norm}}.\]
Set $\widetilde{\cP}_n(t):=\cP(\widetilde{\cG}_n,t)$, $\cP_n(t):=\cP(\cG_n,t)$, and $\cP_n^{[k]}(t):=\cP(\cG_n^{[k]},t)$ to be the Golod-Shafarevich polynomials associated with the minimal filtrations \eqref{filtration 1}, \eqref{filtration 2} and \eqref{filtration 3} respectively. It follows from Remark \ref{boring remark} that 
\begin{equation}\label{some boring inequalities}d(n)=d(\widetilde{\cG}_n)\geq d(\cG_n)\geq d(\cG_n^{[k]})\end{equation} for all $k\geq 0$. Since we successively cut by elements with depth $\geq 2$, we have that 
\[d(n)=d(\widetilde{\cG}_n)=d(\cG_n)=d(\cG_n^{[k]}),\]
(cf. \cite[Proposition 5]{hajir2021cutting}).

\begin{proposition}\label{prop 3.4}
    For $t\in (0,1)$,
    $\cP_n(t)\leq 1-D_n t+R_n t^2$, where $D_n, R_n$ are constants satisfying
    \[\begin{split}D_n\geq  & p^n\left(1+\frac{[\dL:\Q]}{2}\right),\\
    D_n\leq  & p^n\left(T_1+\frac{[\dL:\Q]}{2}+\mu(\dL_\infty/\dL)\right)+o(p^n),\\
    R_n= & p^{2n}\left(\frac{\sum_{i=2}^gd_i^2}{2}\right)+O(p^n).
    \end{split}\]
\end{proposition}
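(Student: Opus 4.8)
The plan is to read the estimate off the cutting presentation \eqref{filtration 2}. Recall that $\widetilde{\cG}_n=\op{Gal}(\widetilde{\cL}_n/\dL_n)=\op{G}_S(\dL_n)$, so that $d(\widetilde{\cG}_n)=d(n)$ and $r(\widetilde{\cG}_n)=r(n)$, and that \eqref{filtration 2} is obtained from the minimal presentation \eqref{filtration 1} of $\widetilde{\cG}_n$ by adjoining to $R$ the commutators $z_{i,j}^{(v)}$ with $1\le i<j\le n_v$ and $v\in S_p(\dL_n)$, each of depth $\omega_F(z_{i,j}^{(v)})\ge 2$ by \eqref{omega eqn}; moreover $d(\cG_n)=d(n)$ by \cite[Proposition 5]{hajir2021cutting}. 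Applying the cutting inequality from the discussion following Definition \ref{def of GS groups} with $G=\widetilde{\cG}_n$, $G'=\cG_n$ and cutting set $\{z_{i,j}^{(v)}\}$, and using $t^{\omega_F(z_{i,j}^{(v)})}\le t^{2}$ for $t\in(0,1)$, one obtains
\[\cP_n(t)\ \le\ 1-d(n)\,t+r(n)\,t^{2}+\Bigl(\sum_{v\in S_p(\dL_n)}\binom{n_v}{2}\Bigr)t^{2}\ =\ 1-D_n\,t+R_n\,t^{2},\]
where $D_n:=d(n)$ and $R_n:=r(n)+\sum_{v\in S_p(\dL_n)}\binom{n_v}{2}$.

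The bounds on $D_n$ are then precisely those furnished by Corollary \ref{corollary bounds on d(n) and r(n)}, which also gives $r(n)=O(p^{n})$; so the real work is the estimate of $\sum_{v\in S_p(\dL_n)}\binom{n_v}{2}$. I would group the primes $v\in S_p(\dL_n)$ according to the prime $\p_i$ of $\dL$ lying below them. There are $g_i(n)$ primes $v\mid\p_i$, each with $n_v=e_i(n)f_i(n)d_i+2$ by \eqref{n_v def equation}, and since $e_i(n)f_i(n)g_i(n)=[\dL_n:\dL]=p^{n}$ this equals $p^{n}d_i/g_i(n)+2$. Expanding $\binom{n_v}{2}$ and summing over the $g_i(n)$ primes above $\p_i$ gives
\[\sum_{v\mid\p_i}\binom{n_v}{2}\ =\ \frac{g_i(n)}{2}\Bigl(\frac{p^{n}d_i}{g_i(n)}+2\Bigr)\Bigl(\frac{p^{n}d_i}{g_i(n)}+1\Bigr)\ =\ \frac{p^{2n}d_i^{2}}{2\,g_i(n)}+O(p^{n}),\]
the two non-leading terms being $\tfrac32 p^{n}d_i+g_i(n)\le\tfrac32 p^{n}d_i+p^{n}$, so the implied constant depends only on $d_i$. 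Since $g_i(n)\ge 1$ the leading term is $\le\tfrac12 p^{2n}d_i^{2}$, and for $i=1$, where $g_1(n)=p^{n}$ and $e_1(n)=f_1(n)=1$, the block over $\p_1$ equals $p^{n}\binom{d_1+2}{2}=O(p^{n})$. Summing over the finitely many $i$ and adding $r(n)=O(p^{n})$ yields $R_n\le\tfrac12 p^{2n}\sum_{i=2}^{g}d_i^{2}+O(p^{n})$; the matching lower bound comes out of the same expansion, the blocks over the $\p_i$ with $2\le i\le g$ and $g_i(n)$ bounded contributing the full leading term while the infinitely split primes and $r(n)$ contribute only $O(p^{n})$. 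This is the asserted estimate for $R_n$.

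The one step needing care is the control of this second sum: an individual prime $v\mid\p_i$ with $i\ge 2$ can have $n_v$ as large as $\asymp p^{n}d_i$, precisely when $\p_i$ is inert or deeply ramified in $\dL_n$ and hence $g_i(n)$ is small, so $\binom{n_v}{2}$ cannot be bounded term by term by anything of size $O(p^{n})$. What rescues the estimate is that there are then only $g_i(n)=p^{n}/(e_i(n)f_i(n))$ primes above $\p_i$, so the product $g_i(n)\binom{n_v}{2}$ is governed by $\tfrac12 p^{2n}d_i^{2}/g_i(n)\le\tfrac12 p^{2n}d_i^{2}$; this cancellation is exactly the identity $e_i(n)f_i(n)g_i(n)=p^{n}$, and it is what keeps $R_n$ of order $p^{2n}$ rather than larger. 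Everything else is bookkeeping with Theorem \ref{thm 2.2}, Corollary \ref{corollary bounds on d(n) and r(n)}, the equality $d(\cG_n)=d(\widetilde{\cG}_n)$ under cutting, and the cutting inequality of Section \ref{s 2}.
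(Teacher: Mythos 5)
Your derivation of the quadratic upper bound is exactly the paper's: starting from the minimal presentation \eqref{filtration 1}, cutting by the commutators $z_{i,j}^{(v)}$ of depth $\geq 2$, and invoking the cutting inequality together with Corollary \ref{corollary bounds on d(n) and r(n)} gives $\cP_n(t)\leq 1-d(n)t+\bigl(r(n)+\sum_{v\in S_p(\dL_n)}\binom{n_v}{2}\bigr)t^2$, and your bookkeeping with $e_i(n)f_i(n)g_i(n)=p^n$ correctly shows this sum is at most $\tfrac{1}{2}p^{2n}\sum_{i=2}^g d_i^2+O(p^n)$. The gap is in your claimed \emph{matching lower bound} for your choice $R_n:=r(n)+\sum_{v}\binom{n_v}{2}$ (the exact sum). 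Your own expansion gives, for the block above $\p_i$ with $i\geq 2$, the contribution $\tfrac{p^{2n}d_i^2}{2g_i(n)}+O(p^n)$. If $\p_i$ is infinitely decomposed in $\dL_\infty$ (there may be $T_1-1\geq 1$ such primes besides $\p_1$), this block is only $O(p^n)$, so its $d_i^2$ simply disappears from the leading term; and if $\p_i$ is finitely decomposed but $g_i(n)$ stabilizes at a value $g_i^\ast>1$ (nothing in the hypotheses forces $g_i^\ast=1$), the block contributes $\tfrac{p^{2n}d_i^2}{2g_i^\ast}$, not $\tfrac{p^{2n}d_i^2}{2}$. Hence in general your $R_n$ satisfies $R_n=\tfrac{p^{2n}}{2}\sum_{i>T_1}\tfrac{d_i^2}{g_i^\ast}+O(p^n)$, whose leading coefficient can be strictly smaller than the asserted $\tfrac{1}{2}\sum_{i=2}^g d_i^2$; the statement $R_n=p^{2n}\bigl(\tfrac{\sum_{i=2}^g d_i^2}{2}\bigr)+O(p^n)$ is therefore false for your $R_n$ except in special cases ($T_1=1$ and all $g_i^\ast=1$).

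This is not a cosmetic point: the lower bound on $R_n$ is precisely what feeds the constant $C$ in Theorem \ref{thm 1.2}, since the final estimate is $\rho^{[k]}(\dL_n)\geq t_n^{-1}=2R_n/D_n$; with your smaller $R_n$ one would only obtain a weaker constant involving $\sum_{i>T_1}d_i^2/g_i^\ast$. The fix is the paper's choice: define $R_n:=r(n)+p^n\binom{d_1+2}{2}+\sum_{i=2}^g\binom{p^nd_i+2}{2}$, which dominates the exact sum (so the inequality $\cP_n(t)\leq 1-D_nt+R_nt^2$ persists for $t\in(0,1)$, since enlarging $R_n$ only increases the right-hand side) and visibly equals $\tfrac{p^{2n}}{2}\sum_{i=2}^g d_i^2+O(p^n)$ on the nose. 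In other words, the proposition asks only for \emph{some} admissible pair $(D_n,R_n)$ with the stated asymptotics, and the deliberately lossy over-count is what makes the asymptotic equality (and hence the eventual constant $C$) come out as stated; your sharper exact count does not.
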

\begin{proof}
    The group $\cG_n$ is obtained from $\widetilde{\cG}_n$ by quotienting by the commutators of each of the decomposition groups at primes $v\in S_p(\dL_n)$. At each prime $v$, we cut by $\binom{n_v}{2}$ elements $\{z_{i,j}^{(v)}\mid 1\leq i<j\leq n_v , v\in S_p(\dL_n)\}$, where we recall that an upper bound for $n_v$ is given by \eqref{n_v def equation}. Furthermore, it follows from \eqref{omega eqn} that $\omega(z_{i,j}^{(v)})\geq 2$. Therefore, we find that for $t\in (0,1)$, 
\[\begin{split}
\cP_n(t) &  \leq \widetilde{\cP}_n(t)+\left(\sum_{v\in S_p(\dL_n)} \binom{n_v}{2}\right) t^2 \\
&\leq \widetilde{\cP}_n(t)+ \left( p^n \binom{d_1+2}{2} +\sum_{i=2}^g \binom{p^n d_i+2}{2}\right) t^2\\
& \leq 1-d(n)t +\left( r(n)+ p^n \binom{d_1+2}{2} +\sum_{i=2}^g \binom{p^n d_i+2}{2}\right) t^2 \\
&\leq 1-D_nt+R_nt^2, \\
\end{split}\]
where 
\[\begin{split} D_n &:=d(n),\\
 R_n &:=r(n)+ p^n \binom{d_1+2}{2} +\sum_{i=2}^g \binom{p^n d_i+2}{2}\\
 & = p^{2n}\left(\frac{\sum_{i=2}^gd_i^2}{2}\right)+O(p^n).\\
\end{split}
\]
Here, we have invoked the bounds for $d(n)$ and $r(n)$ from Corollary \ref{corollary bounds on d(n) and r(n)}.
\end{proof}

\begin{proof}[Proof of Theorem \ref{thm 1.2}]
\par It follows from Proposition \ref{prop 3.4} that for $t\in (0,1)$,
\[\cP_n(t)\leq 1-D_n t+R_n t^2,\]
where 
\begin{equation}\label{bounds for D_n and R_n}\begin{split}D_n\geq & p^n\left(1+\frac{[\dL:\Q]}{2}\right)\\
    D_n \leq  & p^n\left(T_1+\frac{[\dL:\Q]}{2}+\mu(\dL_\infty/\dL)\right)+o(p^n),\\
    R_n= & p^{2n}\left(\frac{\sum_{i=2}^gd_i^2}{2}\right)+O(p^n).
    \end{split}\end{equation}
 For $k\geq 1$, the group $\cG_n^{[k]}$ is obtained from $\cG_n$ by quotienting by the $p^k$-th powers of the generators of the decomposition groups at the primes $v\in S_p(\dL_n)$. The total number of elements we quotient by is 
\begin{equation}\label{R_n'}\begin{split} R_n' & :=\sum_{v\in S_p(\dL_n)} n_v,\\
& \leq p^n(d_1+2)+\sum_{i=2}^g (p^nd_i+2),\\
&= p^n(\sum_{i=1}^g d_i +2)+O(1).
\end{split}\end{equation}
Let $y_1, \dots, y_{R_n'}$ be the elements that generate the decomposition groups at the primes $v\in S_p(\dL_n)$. By Proposition \ref{prop 2.1}, we have that 
\[\omega_F(y_i^{p^k})= p^k\omega_F(y_i)\geq p^k.\]
Therefore, for $t\in (0,1)$, we find that
\[\cP_n^{[k]}(t)\leq \cP_n(t)+R_n't^{p^k}\leq \cP_n(t)+R_n't^{p}.\]
\par Consider the polynomial $Q(t):=1-D_nt +R_nt^2+R_n't^{p}$. Setting $t_n:=\frac{D_n}{2R_n}$, note that the quadratic part $1-D_nt +R_nt^2$ of $Q(t)$ attains it minimum value at $t_n$. We find that 
\[Q(t_n)=1-\frac{D_n^2}{4R_n}+\frac{R_n' D_n^p}{2^p R_n^p} .\] 
We require that $Q(t_n)<0$, i.e., 
\[D_n^2>4R_n+\frac{R_n'D_n^p}{2^{p-2}R_n^{p-1}},\] i.e., 
\begin{equation}\label{inequality main R_n D_n}
2^{p-2} R_n^{p-1} D_n^2>2^p R_n^p+R_n' D_n^p.\end{equation}
It follows from the bounds \eqref{bounds for D_n and R_n} that as $n\rightarrow \infty$,
\begin{equation}\label{A and B equations}
    \begin{split}
        & D_n^2\geq   Ap^{2n};\\
        & R_n=Bp^{2n}+O(p^n);
    \end{split}
\end{equation}
where \[\begin{split} & A=\left(1+\frac{[\dL:\Q]}{2}\right)^2 \\
& B=\left(\frac{\sum_{i=2}^gd_i^2}{2}\right).\end{split}\]
Note that by \eqref{R_n'}, $R_n'=O(p^n)$. We estimate both sides of the inequality \eqref{inequality main R_n D_n}. Therefore, we obtain the following asymptotic estimates
\begin{equation}\label{random equation}
    \begin{split}
        & 2^{p-2} R_n^{p-1} D_n^2\geq 
 2^{p-2}B^{p-1}A p^{2pn}+O(p^{(2p-1)n});\\
 & R_n'D_n^p=O(p^{(p+1)n});\\
 & 2^p R_n^p=2^p B^pp^{2np}+O(p^{(2p-1)n}).
    \end{split}
\end{equation}
Since $2p-1\geq p+1$, it follows that $R_n' D_n^p=O(p^{(2p-1)n})$.
By condition \eqref{p4 of thm 1.2} of Theorem \ref{thm 1.2},
\[\frac{2^{p-2}B^{p-1}A}{2^pB^p}=\frac{A}{4 B}=\frac{\left(2+[\dL:\Q]\right)^2}{8\left(\sum_{i=2}^g d_i^2\right)}>1.\]
Therefore, for large enough values of $n$, we have that $Q(t_n)<0$. By the estimates in the statement of Proposition \ref{prop 3.4},
\begin{equation}\label{R_n and D_n}\begin{split}D_n\leq  & p^n\left(T_1+\frac{[\dL:\Q]}{2}+\mu(\dL_\infty/\dL)\right)+o(p^n)\\
    R_n= & p^{2n}\left(\frac{\sum_{i=2}^gd_i^2}{2}\right)+O(p^n).
    \end{split}\end{equation} Therefore, in particular, we find that $t_n\in (0,1)$ for all large enough values of $n$. Since $\cP_n^{[k]}(t)\leq Q(t)$ for all $t\in (0,1)$, it follows that $\cP_n^{[k]}(t_n)<0$ for all large enough values of $n$, and all values of $k\geq 1$. 

With respect to notation above, 
\[t_n^{-1}=\frac{2R_n}{D_n}.\]

Let $C>0$ be a constant for which
\[C<\frac{2\left(\sum_{i=2}^g d_i^2\right)}{\left(2 T_1+[\dL:\Q]+2\mu(\dL_\infty/\dL)\right)}.\] Then, we find that for all large enough values of $n$, we find that $t_n^{-1}>Cp^n$. Since for large enough values of $n$, $\cP_n^{[k]}(t_n)<0$ and $t_n\in (0,1)$, it follows from Proposition \ref{lower bound on rho(G)} that \[\rho^{[k]}(\dL_n)\geq t_n^{-1},\] and this proves the result.
\end{proof}

\begin{proof}[Proof of Corollary \ref{main cor}]
   We show that the conditions of Theorem \ref{thm 1.2} are satisfied.  
   \begin{enumerate}
       \item It is assumed that $g=\ell-1$, in particular, $g>1$.
       \item Clearly, $\dL:=\Q(\mu_{p\ell})$ contains $\mu_p$. 
       \item Note that $d_i=(p-1)$ for all $i$. Since $\ell\geq 5$, we find that
       \[[\dL:\Q]=(\ell-1)(p-1)\geq 2(d_1+1)=2p.\]
       \item We find that $([L:\Q]+2)^2=((\ell-1)(p-1)+2)^2>(\ell-1)^2(p-1)^2$ and that $8(\sum_{i=2}^g d_i^2)=8(\ell-2)(p-1)^2$. Clearly, $\ell\geq 11$ implies that $(\ell-1)^2>8(\ell-2)$, and therefore, 
       \[([L:\Q]+2)^2>8(\sum_{i=2}^g d_i^2).\]
   \end{enumerate}
\end{proof}

Next, we set $m^{[k]}(\dL_n):=m\left(\cG_n^{[k]}\right)$, where we recall that $\cG_n^{[k]}:=\cG^{[k]}(\dL_n)$. The following result gives an asymptotic lower bound for the numbers $m^{[k]}(\dL_n)$, as $n\rightarrow \infty$.

\begin{theorem}\label{thm 3.6}
Let $p$ be a prime number and $\dL$ be a number field for which the conditions of Theorem \ref{thm 1.2} are satisfied.
Then, there exists a constant $c>0$ (independent of $n$ and $k$) and $n_0\in \Z_{\geq 0}$, such that for all $n\geq n_0$ and $k\geq 1$, we have that
\[m^{[k]}(\dL_n)\geq cp^n.\]
\end{theorem}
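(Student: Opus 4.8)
The plan is to mimic the proof of Theorem~\ref{thm 1.2}, but instead of comparing the leading asymptotics of $2^{p-2}R_n^{p-1}D_n^2$ against $2^pR_n^p$ in order to certify $\cP_n^{[k]}(t_n)<0$, I will track how much one can \emph{further} cut $\cG_n^{[k]}$ by depth-$\geq 2$ elements before the Golod-Shafarevich polynomial is driven back up to a nonnegative function on $(0,1)$. Concretely, suppose we cut $\cG_n^{[k]}$ by an admissible datum $\{y_1,\dots,y_{M}\}$ to obtain $G'$ with $d(G')=d(\cG_n^{[k]})$. By the displayed inequality preceding Definition~\ref{defn size} (applied to the presentation \eqref{filtration 3}) and by Proposition~\ref{prop 2.1}, the new Golod-Shafarevich polynomial satisfies, for $t\in(0,1)$,
\[
P(G';t)\ \leq\ \cP_n^{[k]}(t)+Mt^2\ \leq\ 1-D_n t+(R_n+M)t^2+R_n' t^{p},
\]
exactly the bound used before but with $R_n$ replaced by $R_n+M$ (the extra $R_n' t^p$ term coming from the $p^k$-th power relations is harmless, as in the proof of Theorem~\ref{thm 1.2}, since $R_n'=O(p^n)$). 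So it suffices to find the threshold value of $M$ below which the quadratic-plus-$t^p$ expression on the right still attains a negative value on $(0,1)$; any $M$ strictly below that threshold certifies that $G'$ is Golod-Shafarevich, hence $m\bigl(\cG_n^{[k]}\bigr)\geq M$.

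The key computation is therefore a repeat of the $Q(t_n)$ estimate. Setting $\widetilde{R}_n:=R_n+M$ and $t_n:=D_n/(2\widetilde{R}_n)$, the argument of the proof of Theorem~\ref{thm 1.2} gives $Q(t_n)<0$ once
\[
\frac{A}{4(B+M p^{-2n})}>1\qquad\text{asymptotically, where }A=\Bigl(1+\tfrac{[\dL:\Q]}{2}\Bigr)^2,\ B=\frac{\sum_{i=2}^g d_i^2}{2},
\]
i.e.\ once $B+Mp^{-2n}<A/4$. By hypothesis~\eqref{p4 of thm 1.2}, $A/4>B$, so the gap $A/4-B$ is a fixed positive constant; writing $M=c\,p^{2n}$, the inequality becomes $c<A/4-B$, which holds for any fixed $c$ in that range once $n$ is large. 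One also has to check $t_n\in(0,1)$ for large $n$, which follows from the lower bound $D_n\geq p^n(1+[\dL:\Q]/2)$ and $\widetilde{R}_n=O(p^{2n})$ exactly as before, and that the $R_n' t_n^{p}$ correction is of strictly lower order, which follows since $R_n'=O(p^n)$ and $t_n\asymp p^{-n}$ make that term $O(p^{(1-p)n})\cdot\text{(bounded)}$, negligible against the $O(1)$ main terms of $Q(t_n)$ after multiplying through by $\widetilde{R}_n^{p-1}$ as in \eqref{random equation}. Hence for all large $n$ and all $k\geq 1$, cutting $\cG_n^{[k]}$ by $M=\lfloor c\,p^{2n}\rfloor$ admissible elements still leaves a Golod-Shafarevich group, so $m^{[k]}(\dL_n)\geq \lfloor c\,p^{2n}\rfloor\geq c'p^n$ for a suitable $c'>0$; taking the minimum over minimal presentations does not hurt since the bound above was obtained from the specific presentation \eqref{filtration 3}, and \emph{any} minimal presentation $\mathfrak{M}$ of $\cG_n^{[k]}$ feeds into $m_{\mathfrak{M}}$ the same way — more carefully, one should either argue that \eqref{filtration 3} achieves (or nearly achieves) the minimum, or simply note that $m(\cG_n^{[k]})=\min_{\mathfrak{M}}m_{\mathfrak{M}}(\cG_n^{[k]})$ and that the displayed upper bound on $P(G';t)$ holds verbatim starting from any minimal presentation, since all that was used is $r(\cG_n^{[k]})\leq R_n$ and $\omega_F(y_i)\geq 2$.

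The main obstacle I anticipate is the last point: the size $m(G)$ is defined as a minimum over \emph{all} minimal presentations $\mathfrak{M}$, so a lower bound $m^{[k]}(\dL_n)\geq cp^n$ must hold no matter which minimal presentation one starts from. The resolution is that the only features of the presentation \eqref{filtration 3} used in the estimate are (i) $d(\cG_n^{[k]})=D_n$ with the two-sided bounds from Corollary~\ref{corollary bounds on d(n) and r(n)}, and (ii) an upper bound $r(\cG_n^{[k]})\leq R_n=p^{2n}\bigl(\tfrac{1}{2}\sum_{i\geq 2}d_i^2\bigr)+O(p^n)$ on the number of relations $r(\cG_n^{[k]})=\dim_{\F_p}H^2(\cG_n^{[k]},\F_p)$ — and $r(\cG_n^{[k]})$ is an intrinsic cohomological invariant, independent of the presentation. (That $r(\cG_n^{[k]})\leq R_n$ follows because \eqref{filtration 3} exhibits $\cG_n^{[k]}$ with at most $R_n+R_n'$ relators, but any minimal set of relators has size $r(\cG_n^{[k]})$; alternatively bound $\dim H^2$ directly.) Since $\omega_F(y_i)\geq 2$ is automatic for an admissible cutting datum, the bound $P(G';t)\leq 1-D_nt+(r(\cG_n^{[k]})+M)t^2$ holds starting from \emph{any} minimal presentation, and the threshold computation above goes through uniformly. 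Thus $m_{\mathfrak{M}}(\cG_n^{[k]})\geq\lfloor c p^{2n}\rfloor$ for every $\mathfrak{M}$, hence for the minimum, completing the proof with, say, $c'$ any constant strictly below $2\bigl(\sum_{i=2}^g d_i^2\bigr)/\bigl(2T_1+[\dL:\Q]+2\mu(\dL_\infty/\dL)\bigr)$ and $n$ large (one can in fact get the far stronger $m^{[k]}(\dL_n)\gg p^{2n}$, but $cp^n$ suffices for the statement).
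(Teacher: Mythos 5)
Your proposal is correct and takes essentially the same route as the paper: bound $P(G';t)\leq 1-D_nt+(R_n+M)t^2+R_n't^{p}$ using that cut elements have depth $\geq 2$, evaluate at $t\approx D_n/\left(2(R_n+M)\right)$, and use hypothesis \eqref{p4 of thm 1.2} (i.e.\ $A>4B$) to obtain a threshold of order $p^{2n}$ for admissible cuts. The paper runs the identical computation contrapositively (plugging $a_n=D_n/\left(2(R_n+m+1)\right)$ into $P(G';a_n)\geq 0$ to get $m\geq (A/4-B)p^{2n}-Cp^n$), and your explicit handling of the minimum over minimal presentations via the intrinsic bound $r(\cG_n^{[k]})=\dim_{\F_p}H^2(\cG_n^{[k]},\F_p)\leq R_n+R_n'$ is a point the paper treats only implicitly.
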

\begin{proof}
    We set $m_n:=m\left(\cG_n^{[k]}\right)$, and assume without loss of generality that $m_n<\infty$. We obtain a lower bound on $m_n$. Choose a minimal presentation 
    \[1\rightarrow R\rightarrow F \xrightarrow{\varphi} \cG_n^{[k]}\rightarrow 1\] such that there exists $t_0\in (0,1)$ for which $\cP_n^{[k]}(t):=P(\cG_n^{[k]}, t_0)<0$ for the associated Golod-Shafarevich polynomial. Let $\{y_1, \dots, y_{m+1}\}\subset F$ be an admissible cutting datum. Note that by definition, $\omega_F(y_i)\geq 2$ for all $i$. Setting $x_i:=\varphi(y_i)$, denote by $G'$ the quotient $\cG_n^{[k]}/\langle x_1, \dots, x_{m+1}\rangle^{\op{Norm}}$. By definition, the admissible cutting datum $\{y_1, \dots, y_{m+1}\}$ can be chosen so that $P(G',t)\geq 0$ for all $t\in (0,1)$. From the proof of Theorem \ref{thm 1.2}, we have that $\cP_n^{[k]}(t)\leq Q(t)$, where $Q(t):=1-D_n t+R_n t^2+R_n' t^{p^k}$. For $t\in (0,1)$, we find that 
    \[\begin{split}P(G',t) & \leq \cP_n^{[k]}(t)+(m+1)t^2 \\
    & \leq Q(t)+ (m+1)t^2 \\
    & = 1-D_n t+(R_n+m+1) t^2+R_n' t^{p^k}\\ 
    & \leq  1-D_n t+(R_n+m+1) t^2+R_n' t^{p}.\end{split}\]
    In what follows, we set $a_n:=\frac{ D_n}{2(R_n+m+1)}$. From the estimates \eqref{R_n and D_n}, it follows that $a_n\in (0,1)$ for large enough values of $n$. Therefore, assume that $n$ is large enough so that $a_n\in (0,1)$. Then, $P(G',a_n)\geq 0$, i.e.,
    \[1-D_n a_n+(R_n+m+1) a_n^2+R_n' a_n^p\geq 0.\]
    We find that 
    \[\begin{split} & 1-D_n a_n+(R_n+m+1) a_n^2+R_n' a_n^p \\
    = & \frac{2^p(R_n+m+1)^p-2^{p-2}(R_n+m+1)^{p-1}D_n^2+R_n'D_n^p}{2^p(R_n+m+1)^p}
    \end{split}\]
    This implies that 
    \[\begin{split}(R_n+m+1) &\geq \frac{D_n^2}{4}-\frac{R_n' D_n^p}{2^p(R_n+m+1)^{p-1}};\\
    & \geq \frac{D_n^2}{4}-\frac{R_n' D_n^p}{2^pR_n^{p-1}},
    \end{split}\]
    and therefore, we get the inequality
    \begin{equation}\label{e 3} m\geq \frac{D_n^2}{4}-R_n-\frac{R_n' D_n^p}{2^pR_n^{p-1}}-1.\end{equation}
    By \eqref{A and B equations},  
   \begin{equation}\label{e 1}\begin{split}
        & D_n^2\geq  Ap^{2n};\\
        & R_n=Bp^{2n}+O(p^n);
    \end{split}\end{equation}
where\[\begin{split} & A=\left(1+\frac{[\dL:\Q]}{2}\right)^2 \\
& B=\left(\frac{\sum_{i=2}^gd_i^2}{2}\right).\end{split}\]
Furthermore, by \eqref{random equation}, \begin{equation}\label{e 2}\frac{R_n' D_n^p}{2^pR_n^{p-1}}=O(p^{(3-p)n})=O(1).\end{equation}
Therefore, combining \eqref{e 3}, \eqref{e 1} and \eqref{e 2}, we have that 
\[m\geq (A/4-B) p^{2n}-Cp^n,\] for some large enough constant $C>0$. Recall that $m_n$ is the minimum value of $m$ over all possible minimal presentations. We choose a minimal presentation such that $m_n=m$. It follows from the condition \eqref{p4 of thm 1.2} of Theorem \ref{thm 1.2} that $A>4B$. Setting $c:=A/4-B$, the result follows.
\end{proof}

\bibliographystyle{alpha}
\bibliography{references}

\begin{thebibliography}{DdSMS99}

\bibitem[DdSMS99]{dixonetal}
John Dixon, Marcus du~Sautoy, Avinoam Mann, and Daniel Segal.
\newblock {\em Analytic pro-{$p$} groups}, volume~61 of {\em Cambridge Studies
  in Advanced Mathematics}.
\newblock Cambridge University Press, Cambridge, second edition, 1999.

\bibitem[Ers12]{ershov2012golod}
Mikhail Ershov.
\newblock Golod--{S}hafarevich groups: a survey.
\newblock {\em International {J}ournal of {A}lgebra and Computation},
  22(05):1230001, 2012.

\bibitem[HM01]{hajir2001tamely}
Farshid Hajir and Christian Maire.
\newblock Tamely ramified towers and discriminant bounds for number fields.
\newblock {\em Compositio Mathematica}, 128(1):35--53, 2001.

\bibitem[HMR20]{hajir2020infinite}
Farshid Hajir, Christian Maire, and Ravi Ramakrishna.
\newblock Infinite class field towers of number fields of prime power
  discriminant.
\newblock {\em Advances in Mathematics}, 373:107318, 2020.

\bibitem[HMR21]{hajir2021cutting}
Farshid Hajir, Christian Maire, and Ravi Ramakrishna.
\newblock Cutting towers of number fields.
\newblock {\em Annales math{\'e}matiques du {Q}u{\'e}bec}, 45(2):321--345,
  2021.

\bibitem[Iwa73]{iwasawa1973zl}
Kenkichi Iwasawa.
\newblock On $\mathbb{Z}_\ell$-extensions of algebraic number fields.
\newblock {\em Annals of Mathematics}, pages 246--326, 1973.

\bibitem[JM11]{joshi2011infinite}
Kirti Joshi and Cameron McLeman.
\newblock Infinite {H}ilbert class field towers from {G}alois representations.
\newblock {\em International Journal of Number Theory}, 7(01):1--8, 2011.

\bibitem[KL87]{kisilevsky1987sufficient}
Hershey Kisilevsky and John Labute.
\newblock On a sufficient condition for the p-class tower of a {CM}-field to be
  infinite.
\newblock In {\em Proceedings of the Int. Number Conf. Laval}, 1987.

\bibitem[Koc02]{koch2002galois}
Helmut Koch.
\newblock {\em Galois theory of p-extensions}.
\newblock Springer Science and Business Media, 2002.

\bibitem[Laz65]{lazard1965groupes}
Michel Lazard.
\newblock Groupes analytiques $ p $-adiques.
\newblock {\em Publications Math{\'e}matiques de l'IH{\'E}S}, 26:5--219, 1965.

\bibitem[Mar78]{martinet1978tours}
Jacques Martinet.
\newblock Tours de corps de classes et estimations de discrimants.
\newblock {\em Inventiones mathematicae}, 44:65--74, 1978.

\bibitem[MRT16]{minavc2016dimensions}
J{\'a}n Min{\'a}{\v{c}}, Michael Rogelstad, and Nguyen~Duy T{\^a}n.
\newblock Dimensions of {Z}assenhaus filtration subquotients of some
  pro-p-groups.
\newblock {\em Israel {J}ournal of {M}athematics}, 212(2):825--855, 2016.

\bibitem[NSW13]{neukirch2013cohomology}
J{\"u}rgen Neukirch, Alexander Schmidt, and Kay Wingberg.
\newblock {\em Cohomology of number fields}, volume 323.
\newblock Springer {S}cience \& {B}usiness Media, 2013.

\bibitem[Odl90]{odlyzko1990bounds}
Andrew~M Odlyzko.
\newblock Bounds for discriminants and related estimates for class numbers,
  regulators and zeros of zeta functions: a survey of recent results.
\newblock {\em Journal de th{\'e}orie des nombres de Bordeaux}, 2(1):119--141,
  1990.

\bibitem[Sch86]{schoof1986infinite}
Ren\'{e} Schoof.
\newblock Infinite class field towers of quadratic fields.
\newblock {\em J. Reine Angew. Math.}, 372:209--220, 1986.

\bibitem[Vin65]{vinberg1965theorem}
Ernest~Borisovich Vinberg.
\newblock On the theorem concerning the infinite-dimensionality of an
  associative algebra.
\newblock {\em Izvestiya Rossiiskoi Akademii Nauk. Seriya Matematicheskaya},
  29(1):209--214, 1965.

\bibitem[VK78]{venkov1978p}
Boris~B. Venkov and Helmut Koch.
\newblock The p-tower of class fields for an imaginary quadratic field.
\newblock {\em Journal of Soviet Mathematics}, 9:291--299, 1978.

\end{thebibliography}
\end{document}